\documentclass[a4paper,12pt, reqno]{amsart}

\usepackage[ansinew]{inputenc}
\usepackage{amsmath,amsthm,amsfonts,amssymb,stackengine}
\usepackage{mathrsfs}
\usepackage[left=2.50cm,right=2.25cm,top=2.50cm,bottom=2.50cm]{geometry}
\usepackage{graphicx}
\usepackage[english]{babel}
\usepackage{hyperref}
\usepackage{enumitem,fancyhdr}
\usepackage{etoolbox,lineno}

\usepackage{fouridx}

\usepackage{soul}
\usepackage[dvipsnames]{xcolor}

\newcommand\red[1]{{\color{red}#1}}
\newcommand\blue[1]{{\color{blue}#1}}

\setcounter{MaxMatrixCols}{10}
\usepackage{mathtools} 
\numberwithin{equation}{section}
\theoremstyle{plain}
\newtheorem{Thm}{Theorem}[section]

\newtheorem{Lem}[Thm]{Lemma}

\theoremstyle{definition}
\newtheorem{Def}[Thm]{Definition}


\usepackage{parskip}
\usepackage{marginnote}
\usepackage{changes}
\definechangesauthor[name=tesfa, color=red]{tesfa}


\begin{document}
	\title{A Front Fixing Crank-Nicolson Finite Deference for the American Put Options Model}
	\author{Z.I. Ali}
	\address{Z.I. Ali, Department of Mathematical Sciences,
		University of South Africa,  Florida 0003, South Africa.}
	\email{alizi@unisa.ac.za}
	\author{M.A. Abebe}
	\address{M.A. Abebe, Department of Mathematical Sciences,
	University of South Africa,  Florida 0003, South Africa.}
	\email{67121497@mylife.unisa.ac.za}

	\maketitle
	
	\begin{abstract}
In this paper, we present a novel approach to solving the American put options pricing model by hugely relying on a front-fixing Crank-Nicolson finite difference method. Since the American put option pricing model is a widely used financial model for valuing an option with the right to sell an underlying asset at a fated price which generally decided in advance. The method we proposed here, solves the problem of early exercise by introducing a front-fixing technique that permits for efficient and accurate valuation of an American put option. As in the comparison to other approaches in the existing literature, we can assert that this method is stable, accurate, consistent, and efficient. The results that we obtained here from the numerical experiments demonstrate not only the efficacy of the proposed method but also in consistently and accurately pricing American put options with a stable scheme. Under some appropriate conditions on the step size discretization, we also show the positivity and monotonicity of the coefficient involved in the numerical scheme used. 
	\end{abstract}
	
	\vspace*{1cm}
	
\textbf{Keywords}: New Crank-Nicolson, American put option, finite difference, discretization, front-fixing method, numerical scheme, efficiency-accuracy, consistency and stability of front-fixing, positivity, monotonicity.
\newpage
\hrule

\vspace*{1cm}

\tableofcontents

\newpage

\pagestyle{fancy}
\fancyhead{}\fancyfoot{}

\lhead{Z.I. \& M.A.}
\rhead{American Put Options Model}
\cfoot{\arabic{page}}

\section{Introduction}
The American put options model is a widely used financial tool for valuing derivative contracts, allowing investors the flexibility to exercise their options at any time prior to expiration, see \textit{i.e.}, Riccardo et al. \cite{fazio2021front} and Company et al.\cite{egorova2014solving} for more details on this topic. We must emphasize that accurate and efficient numerical methods are crucial for pricing these options, and among others, one popular approach is the Crank-Nicolson finite difference scheme. \\

In this study, we introduce a novel variant of the Crank-Nicolson method known as the Front Fixing Crank-Nicolson Finite Difference method which aims to address the challenges posed by the early exercise feature of American put options, where the optimal exercise boundary is not known in advance, we refer to the work  \blue{\cite{holmes2012front,tangman2008fast}} for further details in this direction. By incorporating front fixing techniques into the Crank-Nicolson scheme, we propose an improved numerical method that may capture any behavior of the early exercise much more effectively.

In this paper, we present a detailed description of the Front Fixing Crank-Nicolson Finite Difference method and explore its advantages in pricing American put options. Further, we examine its stability, accuracy, and convergence properties through numerical experiments and we then compare its performance with other existing methods. Furthermore, we discuss the implications and potential applications of the Front Fixing approach in the broader context of option pricing which literally be used by other authors for risk management.

We can assert that this research contributes to the ongoing efforts and the existing literature in developing robust and efficient numerical schemes for pricing American put options, which will assist us to provide valuable insights for financial practitioners, researchers and investors involved in option valuation and as well as portfolio management.\\

American put options have additional privileges for the holder which European options may not possess; the holder is gifted extra right exercised at any moment before the expiration time. This privilege makes the American put option the majority traded in the options market; worth more than its counterpart and pricing has a significant role in the derivatives market as well as in the derivatives market. We refer for instance to Riccardo et.al \cite{fazio2021front} for additional information in this direction. It is worth to note that the pricing option using PDE with some strict assumptions primarily developed by Black and Scholes and  Merton in \cite{RePEc:ucp:jpolec:v:81:y:1973:i:3:p:637-54}   \cite{merton1973theory} respectively, where further references can be found therein. 
Schwartz \cite{schwartz1977valuation}, Brennan, and Schwartz\cite{brennan1977valuation,brennan1978finite} were the pioneer for solving APO by applying the finite difference method, and its accuracy has been proved by Jaillet et al.\cite{jaillet1990variational}. Company et.al \cite{egorova2014solving}, Riccardo et al. \cite{fazio2021front}, H. Han and X. Wu \cite{han2003fast}, Pantazopoulos et al. \cite{pantazopoulos1998front} and Tangman et al.\cite{tangman2008fast} have considered the front tracking methods for  keeping  track of the boundaries as well as the discretization of the problem by changing domain. The hassle in pricing American options arises from determining the optimal exercise of the underlying asset. The optimal exercise boundary is determined by analyzing how the boundary changes as the option near maturity. Note that this boundary converts the free boundary problem to a semi-infinite domain and minimizes the complexity of the non-linear model.

By applying a change in variables, the front-fixing method maps free boundary variables into a fixed domain, which reduces the difficulty  see \textit{e.g.}, \cite{holmes2012front,tangman2008fast,wu1997front,zhu2003application} for more information and the references therein.

Fractional calculus simplifies the modeling of a complex system in many disciplines, and its role has increased in the last decade in many fields such as finance and economics for more details we refer to \cite{kumar2014numerical, yavuz2018different, nuugulu2021robust}; in science and engineering  \cite{baleanu2010new,qureshi2019new,atangana2016new} among others. 

The valuation of APO contracts provoked the interest of both financial engineers and mathematicians over the last few decades due to its privilege and being the most traded in the options market. In this article, we implement Caputo Fabrizio fractional derivatives in the classical Black-Scholes equation to present the Crank-Nicolson finite difference scheme with FFM for solving APO. Here, we present a new scheme of the APO developed by using the Crank-Nicolson implicit finite difference scheme and FFM without dividend payments. This will transfer the variable domain problem to the fixed rectangular domain of the non-linear problem using a front-fixing transformation.
\\

Asymptotic boundary conditions are imposed on a non-linear problem by the appropriate selection of a new truncated boundary. Then, we convert the problem to a new non-linear PDE with a free boundary, and in the computation of the solution, the free boundary will be taken as a new variable. Finally, we create a Crank-Nicolson finite difference scheme and evaluate its consistency and stability.\\

This manuscript is structured as follows: Section 2 deals with the basic concepts such as definition of Caputo-Fabrizio fractional derivatives of APO with its fixed-domain transformation and then we introduce the Crank-Nicolson discretization of the transformed equation. Section 3 addresses the numerical solution positivity and monotonicity w.r.t. time at the optimal exercise boundary. Section 4 discusses the stability and consistency of the scheme. 


\section {The Front-Fixing Method and Discretization}
\subsection{The American Put Options Model}
Let's assume that an underlying asset has a price of  $X$ at time $t$. Here, we considered the following mathematical model for the value of an APO to sell the asset $V(X,t)$.\\
\begin{align}
&\frac{\partial V}{\partial t}-\frac{1}{2}\sigma^2 X^2\frac{\partial^2 V}{\partial^2 X}-rX\frac{\partial V}{\partial X} +rV=0, \text{ for } X=X^*(t), \text{ and }  t\in [0, T],\label{Eeq1}\\
& V(X,T)= \max\{ E-X, 0 \}, \text{ for } X \geq 0,\\
&\frac{\partial V}{\partial X}(X^*(t), t)=-1,\\
&V(X^*(t),t)=E-X^*(t),\\
& \lim\limits_{X\to \infty}V(X, T)=0,\\
& X^*(T)=E,\\
&V(X(t), t) = E-X,\text{ for } X\in [0, X^*(t)).\label{eq1}
\end{align}
Where\\
$\tau=T-t$ represents the period until maturity $T$;\\
$ X^*(t)$ unknown early exercise free boundary with $X^*(t) > 0$;\\
$E$, is strike price;\\
T expiry time;\\
 $ \sigma $ volatility  and ;\\
$r$ the interest rate (risk-free).\\
The Black-Scholes-Merton equation, \eqref{Eeq1}-\eqref{eq1} was developed by  Black and Scholes \cite{black1973pricing}, and Merton\cite{merton1973theory}.
\subsection{The Front-Fixing Method}
A variety of free boundary problems that arise in physics are solved using the FFM, for detail refer \cite{crank1984free}. Wu and Kwok \cite{wu1997front} pioneered in applying and solving the free boundary option pricing using FFM.The front-fixing method has an advantage in direct computing the free boundary problem. That means when we directly apply finite differences and finite element methods to free boundary problems, like the equation from \eqref{Eeq1} to \eqref{eq1}, it is hard to manage the computational mesh points or elements and the problem solved by this technique. According to Landau \cite{landau1950heat}, the difficulty is removed by transforming unknown and time-varying boundaries into known and fixed lines.
The method's fundamental principle is to modify the original problem into non-linear PDE by transforming variables from a free boundary to a bounded domain. First, we rewrite equations \eqref{Eeq1} to \eqref{eq1} over the bounded domain as new parabolic equations then introduce a truncated boundary, and finally apply finite difference schemes to solve approximated problems across a bounded domain.In this paper, we can apply this technique to the American put option problem ( \eqref{Eeq1} to \eqref{eq1} ) with Crank-Nicolson discretization. We consider dimensionless new variables similar to Wu and Kwok \cite{wu1997front} and Riccardo. F et.al.\cite{fazio2021front} did. To conclude this we define
\begin{align}
& y = \ln{\frac{X}{X^*(\tau)}}, \label{eq2}\\
& X_f(\tau) =\frac{X^*(\tau)}{E}, \label{eq2-2} \\
& v(y,\tau) =\frac{V(X,\tau)}{E} \label{eq2-3}
\end{align}
Where $X_f(\tau)$  is the normalized function and it is mapped on the fixed line $y = 0$, $0\leq v(y,\tau)\leq 1$ and $0\leq X_f(\tau)\leq 1$. Then, by plugging \eqref{eq2}-\eqref{eq2-3} into \eqref{Eeq1}-\eqref{eq1} become
\begin{align}
\begin{split}
&\frac{\partial v}{\partial \tau}-\frac{1}{2}\sigma^2 \frac{\partial^2 v}{\partial^2 y}-(r-\frac{\sigma^2}{2})\frac{\partial v}{\partial y}-\frac{1}{X_f(\tau)}\frac{dX_f(\tau)}{d\tau}\frac{\partial v}{\partial y}+rv=0,\\ &\text{   For  } X=X^*(t),   t\in [0, T] \label{Eeq3}
\end{split} 
		\\
\begin{split}
&v(y;0)=0, \text{ for } 0\leq y,    X_f(0)=1,
\end{split}
	\\
\begin{split}
&\lim\limits_{y\to \infty}v(y,\tau)=0.
\end{split}
	 \\
\begin{split}
&v(0;\tau)=1-X_f(\tau),\frac{\partial v}{\partial y}(0,\tau)=-X_f(\tau)
\text{  on  } 0 \leq t \leq T \text{  and }  0 \leq x \leq \infty \label{eq3}
\end{split}
\end{align}
Now we replace \eqref {Eeq3}  by Caputo-Fabrizio time-fractional derivative and equation  \eqref{Eeq3}-\eqref{eq3} become
\begin{align}
\begin{split}
&^c_0D^\alpha _t v(\tau,y) -\frac{1}{2}\sigma^2 \frac{\partial^2 v}{\partial^2 y}-(r-\frac{\sigma^2}{2})\frac{\partial v}{\partial y}-\frac{1}{X_f(\tau)}\frac{dX_f(\tau)}{d\tau}\frac{\partial v}{\partial y}+rv=0, \\& \text{  For  }X=X^*(t),\text{and~} t\in [0,T]\label{eq4}
\end{split}
\\
\begin{split}
v(0,y)=0, \text{ for  } 0\leq y,X_f(0)=1,
\end{split} \label{eq5} \\
\begin{split}
\lim\limits_{y\to \infty}v(\tau,y)=0,
\end{split}\label{eq6} \\
\begin{split}
 &v(\tau,0)=1-X_f(\tau),	\frac{\partial v}{\partial y}(\tau,0)=-X_f(\tau)
\\& \textup{ On the outlined domain, } 0 \leq t \leq T  \text{ and } 0\leq y \leq \infty. \label{eq7}
\end{split}
\end{align}
Where 
\begin{align}
^c_0D^\alpha _t v(\tau,y)=\frac{1}{1-\alpha}\int_{0}^{t}e^\frac{-\alpha(t-\tau)}{(1-\alpha)}v'(\tau)d\tau 
\end{align}
which is the formula for the new fractional Caputo-Fabrizio derivative of order $0<\alpha<1$ defined by Losada and Nieto\cite{losada2015properties}and its Laplace transformation  given by Itrat.A and Dumitru.V  \cite{Mirza2017FundamentalST} as
\begin{align}
L(^c_0D^\alpha _t v(\tau,y))=\frac{SLv(t)-v(0)}{(1-\alpha)s+\alpha} 
\end{align}
So far, this chapter has focused on creating a parabolic equation from the free boundary problem over the bounded domain and converting it to a time-fractional Caputo-Fabrizio derivatives equation. The succeeding sections will define the size mesh of grid points in time and space.

\subsection{The Crank-Nicolson Discretization}
In this section, we can define a numerical scheme to compute the grid values of APO. For this purpose, we start by truncating 
$(0, \infty)$  into  $(0, Y)$ to solve the problem \eqref{Eeq3}-\eqref{eq3}. According to Wilmott et.al\cite{wilmott1993option}, it was logical to estimate and set the upper bound by $Y=4E$. Hence,we obtain $v(Y,t )=0$ from the boundary condition is given by  \eqref{eq3}. In the rest of the paper, we shall solve PDE using the Crank-Nicolson finite difference method.
That is, the problem \eqref{Eeq3}-\eqref{eq3} can be studied on $[0, Y] \times (0,T]$ and let $ M \in \mathbb{N}$  and  $ \mu \in \mathbb{R^{+}}$ ; now we set step-size as follow
\begin{align*}
&\Delta y=\frac{Y}{M}=\frac{4E}{M}\\
&\Delta \tau={\mu}{(\Delta y)^2},\\
&\textup{The integer } N=\lceil\frac{T}{\Delta\tau}\rceil
\end{align*}
Where $\lceil.\rceil:\mathcal{R^+}\rightarrow \mathbb{N}$  is the  ceiling function from a positive real number to the least of a natural number greater or equal to it. Hence, grid-ratio $\mu$ is given by
\begin{align*}
\begin{split}
\mu=\frac{\Delta \tau}{(\Delta y)^2}
\end{split}
\end{align*}
A mesh of grid points can be introduced in the finite domain as follows
\begin{align*}
& y_m = m\Delta y,\\
& \tau^n= n\Delta\tau ,\text{  for  } m = 0, 1, \ldots, M \text{  and  } n = 0,1,\ldots,N.  
\end{align*}
To compute the grid values, we construct a numerical scheme and the free boundary values
\begin{align*}
\begin{split}
v^m_n\approx v(\tau^n,y_m)\\
X^n_f=X_f(\tau^n)
\end{split}
\end{align*}
for $m = 0, 1, \ldots, M$ and $n = 0, 1, \ldots, N$.\\
Having the size of the mesh of grids points concerning time and space which we can use for computing a numerical scheme of the grid value of APO, we will now move to develop a new Crank-Nicolson finite difference scheme of APO.

\subsection {A New Crank-Nicolson  Finite Difference Scheme}
In this part, a Crank-Nicolson finite difference scheme is presented.Recall from  \eqref{eq4} to \eqref{eq7} we have that\\
\begin{align}
\begin{split}
&^c_0D^\alpha _t v(\tau,y)=\frac{1}{2}\sigma^2 \frac{\partial^2 v}{\partial^2 y}+(r-\frac{\sigma^2}{2})\frac{\partial v}{\partial y}+\frac{1}{X_f(\tau)}\frac{dX_f(\tau)}{d\tau}\frac{\partial v}{\partial y}-rv=0,\\& \text{ for }   X=X^*(t), \text{ and } t\in [0,T],
\end{split} \label{eq8} \\
\begin{split}
v(0,y)=0,\text{ for } y\geq 0,X_f(0)=1,
\end{split}\label{eq9}	\\
\begin{split}
\lim\limits_{y\to \infty}v(\tau,y)=0,
\end{split} \label{eq10}\\
\begin{split}
&v(\tau,0)=1-X_f(\tau),\frac{\partial v}{\partial y}(\tau,0)=-X_f(\tau) \\&\textup{  on  } 0 \leq t \leq T \text{  and   }  0\leq t \leq \infty.\label{eq11}
\end{split}
\end{align}
First, discretize the left-hand side term of \eqref{eq8}. According to Qureshi et al.\cite{qureshi2019new} the discretization scheme of the term is given by\\ 
\begin{align}
\begin{split}
^c_0D^\alpha _t v(\tau=t_n,y=y_m) =\frac{1}{\Delta\tau\alpha}(e^{\frac{\alpha}{1-\alpha}\Delta\tau} -1) \sum_{k=1}^{n}(v(t_{n+1-k},y_m)-v(t_{n-k},y_m))e^{\frac{-\alpha}{1-\alpha}k\Delta\tau} \label{eq12}
\end{split}
\end{align}
with
\begin{align}
 \mathcal{TD}^{m}_{n}=\mathcal{O}(\Delta\tau)
\end{align}
Where $\mathcal{TD}^{m}_{n}$ is the local truncation error and independent of the fractional order $\alpha$ for detail see \cite{qureshi2019new}.\\
Next, we can find the Crank-Nicolson discretization of the latter three components of the right-hand side equation \eqref{eq8}, ignoring their local truncation error, with $ m= 1, 2,\hdots, M-1 $ given by\\
\begin{align}
&\frac{1}{2}\sigma^2 \frac{\partial^2 v}{\partial^2 y}+(r-\frac{\sigma^2}{2})\frac{\partial v}{\partial y}+\frac{1}{X_f(\tau)}\frac{dX_f(\tau)}{d\tau}\frac{\partial v}{\partial y}-rv \notag \\& =\frac{1}{2}(\frac{1}{2}\sigma^2 \frac{v^{m+1}_{n+1}-2v^{m}_{n+1}+v^{m-1}_{n+1}}{(\Delta y)^2} +(r-\frac{\sigma^2}{2})\frac{v^{m+1}_{n+1}-v^{m-1}_{n+1}}{2\Delta y}-rv^{m}_{n+1}\notag\\&+\frac{X^{n+1}_f(\tau)-X^{n}_f(\tau)}{\Delta\tau X^{n}_f(\tau)}\frac{v^{m+1}_{n+1}-v^{m-1}_{n+1}}{2\Delta y}+\frac{1}{2}\sigma^2 \frac{v^{m+1}_{n}-2v^{m}_{n}+v^{m-1}_{n}}{(\Delta y)^2}\notag\\&+(r-\frac{\sigma^2}{2})\frac{v^{m+1}_{n}-v^{m-1}_{n}}{2\Delta y}+\frac{X^{n+1}_f(\tau)-X^{n}_f(\tau)}{\Delta\tau X^{n}_f(\tau)}\frac{v^{m+1}_{n}-v^{m-1}_{n}}{2\Delta y}-rv^{m}_{n})\label{eq13}
\end{align}
Then, we can obtain a new Crank-Nicolson time scheme for internal special nodes by inserting equations \eqref{eq12} and \eqref{eq13} as shown below
\begin{align}
&\frac{1}{\Delta\tau\alpha}(e^{\frac{\alpha}{1-\alpha}\Delta\tau} -1) \sum_{k=1}^{n}(v(t_{n+1-k},y_m)-v(t_{n-k},y_m))e^{\frac{-\alpha}{1-\alpha}k\Delta\tau} \notag \\&=
\frac{1}{2}(\frac{1}{2}\sigma^2 \frac{v^{m+1}_{n+1}-2v^{m}_{n+1}+v^{m-1}_{n+1}}{(\Delta y)^2} +(r-\frac{\sigma^2}{2})\frac{v^{m+1}_{n+1}-v^{m-1}_{n+1}}{2\Delta y}-rv^{m}_{n+1}\notag \\&+\frac{X^{n+1}_f(\tau)-X^{n}_f(\tau)}{\Delta\tau X^{n}_f(\tau)}\frac{v^{m+1}_{n+1}-v^{m-1}_{n+1}}{2\Delta y}+\frac{1}{2}\sigma^2 \frac{v^{m+1}_{n}-2v^{m}_{n}+v^{m-1}_{n}}{(\Delta y)^2}\notag \\&+(r-\frac{\sigma^2}{2})\frac{v^{m+1}_{n}-v^{m-1}_{n}}{2\Delta y}+\frac{X^{n+1}_f(\tau)-X^{n}_f(\tau)}{\Delta\tau X^{n}_f(\tau)}\frac{v^{m+1}_{n}-v^{m-1}_{n}}{2\Delta y}-rv^{m}_{n})\label{13a}
\end{align}
Equation \blue{\eqref{13a}} rewritten as
\begin{align}
 &\sum_{k=1}^{n}(v(t_{n+1-k},y_m)-v(t_{n-k},y_m))e^{\frac{-\alpha}{1-\alpha}k\Delta\tau}\notag \\& =A_{n} v^{m+1}_{n+1}+ B v^{m}_{n+1}+ C_{n} v^{m-1}_{n+1}+ A_{n} v^{m+1}_{n}+ B v^{m}_{n} +C_{n} v^{m-1}_{n} \label{13b}
\end{align}
Where
\begin{align*}
&A_{n}=\frac{\Delta\tau\alpha}{e^{\frac{\alpha}{1-\alpha}\Delta\tau} -1}(\frac{1}{4(\Delta y)^2}\sigma^2 +\frac{1}{4\Delta y}(r-\frac{\sigma^2}{2}) +\frac{X^{n+1}_f(\tau)- X^{n}_f(\tau)}{4\Delta y\Delta\tau X^{n}_f(\tau)}) \notag \\
&C_{n}=\frac{\Delta\tau\alpha}{e^{\frac{\alpha}{1-\alpha}\Delta\tau} -1}(\frac{1}{4(\Delta y)^2}\sigma^2 - \frac{1}{4\Delta y}(r-\frac{\sigma^2}{2}) -\frac{X^{n+1}_f(\tau)- X^{n}_f(\tau)}{4\Delta y\Delta\tau X^{n}_f(\tau)}) \text{  and  }\\ 
 &B=\frac{-\Delta\tau\alpha}{2(e^{\frac{\alpha}{1-\alpha}\Delta\tau} -1)}(\frac{\sigma^2}{(\Delta y)^2}+r). 
  \end{align*}  
We intend to represent and verify equation \eqref{13b} in a matrix form. For this purpose, we intend to use a sort of induction technique. To show this, we let $n=1$, with this value of $n$, we have that
\begin{align*}
(v^{m}_{1}-v^{m}_{0})e^{\frac{-\alpha}{1-\alpha}\Delta\tau}
=A_{1}v^{m+1}_{2}+B v^{m}_{2}+C_{1}v^{m-1}_{2}+A_{1}v^{m+1}_{1}+B v^{m}_{1}+C_{1} v^{m-1}_{1}
\end{align*}
which implies that
\begin{align*}
v^{m}_{1}e^{\frac{-\alpha}{1-\alpha}\Delta\tau}
=A_{1}v^{m+1}_{2}+B v^{m}_{2}+C_{1} v^{m-1}_{2}+ A_{1}v^{m+1}_{1}+B v^{m}_{1}+ C_{1}v^{m-1}_{1}
\end{align*}
from which we infer that
\begin{align}
\begin{split}
A_{1}v^{m+1}_{2}+B v^{m}_{2}+ C_{1}v^{m-1}_{2}=-A_{1} v^{m+1}_{1} - (B-e^{\frac{-\alpha}{1-\alpha}\Delta\tau})v^{m}_{1}-C_{1} v^{m-1}_{1}.
\end{split}
\end{align}
Let $n=2$
\begin{align*}
(v^{m}_{2}-v^{m}_{1})e^{\frac{-\alpha}{1-\alpha}\Delta\tau}+ (v^{m}_{1}-v^{m}_{0})e^{\frac{-\alpha}{1-\alpha}2\Delta\tau}
 =A_{2} v^{m+1}_{3}+B v^{m}_{3}+C_{2}v^{m-1}_{3}+A_{2}v^{m+1}_{2}+B v^{m}_{2}+C_{2} v^{m-1}_{2} \\  
\end{align*}
which implies that
\begin{align*}
(v^{m}_{2}-v^{m}_{1})e^{\frac{-\alpha}{1-\alpha}\Delta\tau}+v^{m}_{1}e^{\frac{-\alpha}{1-\alpha}2\Delta\tau}
 =A_{2} v^{m+1}_{3}+B v^{m}_{3}+C_{2}v^{m-1}_{3}+A_{2}v^{m+1}_{2}+B v^{m}_{2}+C_{2} v^{m-1}_{2}\\ 
\end{align*}
from which we infer that
\begin{align}
 A_{2}v^{m+1}_{3}+B v^{m}_{3}+C_{2}v^{m-1}_{3}=-A_{2}v^{m+1}_{2}-(B-e^{\frac{-\alpha}{1-\alpha}\Delta\tau})v^{m}_{2}-C_{2}v^{m-1}_{2}-v^{m}_{1}(e^{\frac{-\alpha}{1-\alpha}2\Delta\tau}-e^{\frac{-\alpha}{1-\alpha}\Delta\tau})   
\end{align}    
Let $n=3$
\begin{align*}
&(v^{m}_{3}-v^{m}_{2})e^{\frac{-\alpha}{1-\alpha}\Delta\tau}+(v^{m}_{2}-v^{m}_{1})e^{\frac{-\alpha}{1-\alpha}2\Delta\tau}+ (v^{m}_{1}-v^{m}_{0})e^{\frac{-\alpha}{1-\alpha}3\Delta\tau}=\notag \\&A_{3}v^{m+1}_{4}+B v^{m}_{4}+C_{3}v^{m-1}_{4} +A_{3}v^{m+1}_{3}+B_{3}v^{m}_{3}+C_{3}v^{m-1}_{3} \\     
\end{align*}
which implies that
\begin{align*}
  (v^{m}_{3}-v^{m}_{2})e^{\frac{-\alpha}{1-\alpha}\Delta\tau}+(v^{m}_{2}-v^{m}_{1})e^{\frac{-\alpha}{1-\alpha}2\Delta\tau}+ v^{m}_{1}e^{\frac{-\alpha}{1-\alpha}3\Delta\tau} =\notag \\ A_{3}v^{m+1}_{4}+B v^{m}_{4}+C_{3}v^{m-1}_{4}+ A_{3}v^{m+1}_{3}+B_{3}v^{m}_{3}+C_{3}v^{m-1}_{3}  
\end{align*}
from which we infer that
\begin{align}
A_{3}v^{m+1}_{4}+B v^{m}_{4}+C_{3}v^{m-1}_{4}
 =&-A_{3}v^{m+1}_{3}-(B-e^{\frac{-\alpha}{1-\alpha}\Delta\tau})v^{m}_{3}- C_{3} v^{m-1}_{3} \notag\\&- v^{m}_{2}(e^{\frac{-\alpha}{1-\alpha}\Delta\tau}-e^{\frac{-\alpha}{1-\alpha}2\Delta\tau})+v^{m}_{1}(e^{\frac{-\alpha}{1-\alpha}2\Delta\tau}-e^{\frac{-\alpha}{1-\alpha}3\Delta\tau})  
\end{align}
Let $n=4$
\begin{align*}
&(v^{m}_{4}-v^{m}_{3})e^{\frac{-\alpha}{1-\alpha}\Delta\tau}+(v^{m}_{3}-v^{m}_{2})e^{\frac{-\alpha}{1-\alpha}2\Delta\tau}+(v^{m}_{2}-v^{m}_{1})e^{\frac{-\alpha}{1-\alpha}3\Delta\tau}+ (v^{m}_{1}-v^{m}_{0})e^{\frac{-\alpha}{1-\alpha}4\Delta\tau}\notag \\
&=A_{4}v^{m+1}_{5}+B v^{m}_{5}+C_{4}v^{m-1}_{5}+ A_{4}v^{m+1}_{4}+   B v^{m}_{4} + C_{4}v^{m-1}_{4}\\  
\end{align*}
from which we infer that
\begin{align}
A_{4}v^{m+1}_{5}+B v^{m}_{5}+C_{4}v^{m-1}_{5}= &-A_{4}v^{m+1}_{4}-(B-e^{\frac{-\alpha}{1-\alpha}\Delta\tau})v^{m}_{4}-C_{4}v^{m-1}_{4} +(e^{\frac{-\alpha}{1-\alpha}\Delta\tau}-e^{\frac{-\alpha}{1-\alpha}2\Delta\tau})v^{m}_{3}\notag \\&+(e^{\frac{-\alpha}{1-\alpha}2\Delta\tau}-e^{\frac{-\alpha}{1-\alpha}3\Delta\tau})v^{m}_{2} +(e^{\frac{-\alpha}{1-\alpha}3\Delta\tau}-e^{\frac{-\alpha}{1-\alpha}4\Delta\tau})v^{m}_{1} 
\end{align}
We have a system of linear equations at each time step with $M-1$ variables and $M+1$ equations. The equations only hold for $m = 1, 2, \hdots, M-1$ and the boundary condition applies again for the missing two equations. The matrix form of the equation is given by  
\[
\begin{bmatrix} 
A_{1} & B & C_{1}&0\hdots &\hdots&\hdots &\hdots \\
 0& A_{2} & B & C_{2}& 0\hdots &\hdots & \hdots &\hdots\\
0 & 0 & A_{3} & B & C_{3}&0 &\hdots &\hdots\\
0 & 0 & 0 & A_{4}& B & C_{4}&0\hdots\\
\vdots &\vdots &\vdots& \vdots & \vdots & \ddots & \ddots & \ddots\\
0 &0 &0  & 0 & 0& A_{n+1}& B & C_{n+1}\\ 
\end{bmatrix}
\times
\begin{bmatrix}
 v^{0}_{n+1}\\
v^{1}_{n+1}\\ 
v^{2}_{n+1}\\
\vdots\\ 
\vdots\\ 
v^{M}_{n+1} 
\end{bmatrix}\\
=\]
\[\begin{bmatrix} 
A_{1} & B & C_{1}&0\hdots &\hdots&\hdots &\hdots \\
 0& A_{2} & B & C_{2}& 0\hdots &\hdots & \hdots &\hdots\\
0 & 0 & A_{3} & B & C_{3}&0 &\hdots &\hdots\\
0 & 0 & 0 & A_{4}& B & C_{4}&0\hdots\\
\vdots &\vdots &\vdots& \vdots & \vdots & \ddots & \ddots & \ddots\\
0 &0 &0  & 0 & 0& A_{n+1}& B & C_{n+1}\\ 
\end{bmatrix}
\times
\begin{bmatrix}
 v^{0}_{n}\\
v^{1}_{n}\\ 
v^{2}_{n}\\
\vdots\\ 
\vdots\\ 
v^{M}_{n} 
\end{bmatrix}\\
+
\begin{bmatrix} 
\ v^0_0 \\
\sum_{k=1}^{n}(e^{\frac{-\alpha}{1-\alpha}(n+1-k)\Delta\tau}-e^{\frac{-\alpha}{1-\alpha}(n-k)\Delta\tau})v^{1}_{k}\\
 \sum_{k=1}^{n}(e^{\frac{-\alpha}{1-\alpha}(n+1-k)\Delta\tau}-e^{\frac{-\alpha}{1-\alpha}(n-k)\Delta\tau})v^{2}_{k} \\
\sum_{k=1}^{n}(e^{\frac{-\alpha}{1-\alpha}(n+1-k)\Delta\tau}-e^{\frac{-\alpha}{1-\alpha}(n-k)\Delta\tau})v^{3}_{k}\\
\sum_{k=1}^{n}(e^{\frac{-\alpha}{1-\alpha}(n+1-k)\Delta\tau}-e^{\frac{-\alpha}{1-\alpha}(n-k)\Delta\tau})v^{4}_{k} \\
\vdots \\
\sum_{k=1}^{n}(e^{\frac{-\alpha}{1-\alpha}(n+1-k)\Delta\tau}-e^{\frac{-\alpha}{1-\alpha}(n-k)\Delta\tau})v^{M-1}_{k}\\ 
\end{bmatrix}
\]
The above matrix has $M-1$ rows and $M+1$ columns, which is the representation of the $M-1$ equation and $M+1$ unknowns. Since, $V^{-1}_{N+1}$ and $V^{M+1}_{N+1}$ are out of domain.The two equations that we are missing come from the boundary conditions. Using these conditions, we are going to convert these systems of equations into systems of equations involving square matrices. The aim is to write the systems of equations in the form of square matrices, where the details of the boundary conditions are fully incorporated.
From the initial condition \eqref{eq9} and boundary condition \eqref{eq10} we get 
\begin{align*}
 V^{0}_{n}&=1-X_f(\tau), \text{ for } n= 0, 1, \ldots, N\\
 \frac{\partial v}{\partial y}(\tau,0)&=-X_f(\tau), \\
 V^{m}_{0} & =0, \text{ for } m=0, 1, \ldots, M\\
 X^{m}_f(0)&=1, \text{ for } m=0, 1, \ldots, M\\ 
 V^{M}_{n} & =0, \text{ for } n= 0, 1, \ldots, N.
\label{aw}  
\end{align*}
Additionally, by plugging the boundary condition of\eqref{eq11} into the main equation \eqref{eq8} and obtain the following new boundary condition
\begin{align}
 ^c_0D^\alpha _t (1-X_f(\tau))-\frac{1}{2}\sigma^2 \frac{\partial^2 v}{\partial^2 y}(\tau,0)+\frac{\sigma^2}{2}X_f(\tau)-X^{'}_f(\tau)-r=0
\end{align}
from which we deduce that
\begin{align}
\begin{split}
\frac{1}{1-\alpha}\int_0^{t}e^\frac{-\alpha(t-\tau)}{(1-\alpha)}X^{'}_f(\tau)d\tau -\frac{1}{2}\sigma^2 \frac{\partial^2 v}{\partial^2 y}(\tau,0)+\frac{\sigma^2}{2}X_f(\tau)-X^{'}_f(\tau)-r=0\label{14}
\end{split}
\end{align}
Arguing similarly as in \cite{qureshi2019new} we can see that \eqref{14} can be expressed in the following equation 
\begin{align}
\begin{split}
\frac{1}{\Delta\tau\alpha}(e^{\frac{\alpha}{1-\alpha}\Delta\tau} -1) \sum_{k=1}^{n}(X_f(t_{n+1-k},0)
-X_f(t_{n-k},0))e^{\frac{-\alpha}{1-\alpha}k\Delta\tau}\\
-\frac{1}{2}\sigma^2 \frac{\partial^2 v}{\partial^2 y}(\tau,0)+\frac{\sigma^2}{2}X_f(\tau)-X^{'}_f(\tau)-r=0.\label{14a} 
\end{split}
\end{align}
Applying the Crank-Nicolson discretization techniques to \eqref{14a} at $m=0$ and the outcome is as follows
\begin{align}
\begin{split}
\frac{1}{\Delta\tau\alpha}(e^{\frac{\alpha}{1-\alpha}\Delta\tau} -1) \sum_{k=1}^{n}(X_f(t_{n+1-k},0)-X_f(t_{n-k},0))e^{\frac{-\alpha}{1-\alpha}k\Delta\tau}\\-\frac{1}{4}\sigma^2( \frac{v^{1}_{n+1}-2v^{0}_{n+1}+v^{-1}_{n+1}}{(\Delta y)^2} +\frac{v^{1}_{n}-2v^{0}_{n}+v^{-1}_{n}}{(\Delta y)^2})\\+\frac{\sigma^2}{2}X_f(\tau)+\frac{X^{n+1}_f(\tau)-X^{n}_f(\tau)}{\Delta\tau}-r=0 \label{eq15}
\end{split}
\end{align}
From the boundary conditions \eqref{eq11}, we can obtain
\begin{align}
&\frac{1}{4}(\frac{v^{1}_{n+1}-v^{-1}_{n+1}}{\Delta y}+\frac{v^{1}_{n}-v^{-1}_{n}}{\Delta y})=-\frac{1}{2}(X^{n+1}_f(\tau)+X^{n}_f(\tau)); \label{eq16}\\
& v^{0}_{n}=1-X^{n}_f(\tau)\label{eq16a}
\end{align}
Thus, by plugging \eqref{eq16} and \eqref{eq16a} in \eqref{eq15} we have
\begin{align}
\begin{split}
\frac{1}{\Delta\tau\alpha}(e^{\frac{\alpha}{1-\alpha}\Delta\tau} -1) \sum_{k=1}^{n}(X_f(t_{n+1-k},0)-X_f(t_{n-k},0))e^{\frac{-\alpha}{1-\alpha}k\Delta\tau}\\-\frac{\sigma^2}{4(\Delta y)^2}( {2v^{1}_{n+1}+2v^{1}_{n}+2(X^{n+1}_f(\tau)+X^{n}_f(\tau))({\Delta y} +1)-4}) \\+\frac{\sigma^2}{2}X_f(\tau)+\frac{X^{n+1}_f(\tau)-X^{n}_f(\tau)}{\Delta\tau}-r=0.\label{eq17} 
\end{split}
\end{align}
Equation \eqref{eq17} can be further simplified to become 
\begin{align}
  v^{1}_{n+1}
  & =-v^{1}_{n}+\frac{2(\Delta y)^2}{\sigma^2}(\frac{e^{\frac{\alpha}{1-\alpha}\Delta\tau} -1}{\Delta\tau\alpha} \sum_{k=1}^{n}(X_f(t_{n+1-k},0)-X_f(t_{n-k},0))e^{\frac{-\alpha}{1-\alpha}k\Delta\tau}+\frac{\sigma^2}{2}X_f(\tau)+ \notag\\
  &+\frac{X^{n+1}_f(\tau)-X^{n}_f(\tau)}{\Delta\tau}-r)-(X^{n+1}_f(\tau)+X^{n}_f(\tau))({\Delta y}+1)+2 \text{  for  } n=1,2\hdots N \label{3.29}.
\end{align}
To show this, first we set $n = 0$ and  with $X_f(0,0) = 1$, we easily have that
\begin{align}
 v^{1}_{1} & =-v^{1}_{0}+\frac{2(\Delta y)^2}{\sigma^2}(\frac{\sigma^2}{2}X_f(\tau)+\frac{X^{1}_f(\tau)-X^{0}_f(\tau)}{\Delta\tau}-r)-(X^{1}_f(\tau)+X^{0}_f(\tau))({\Delta y}+1)+2 \notag\\
 & =\frac{2(\Delta y)^2}{\sigma^2}(\frac{\sigma^2}{2}+\frac{X^{1}_f(\tau)-1}{\Delta\tau}-r)-(X^{1}_f(\tau)+1)({\Delta y}+1)+2 \notag\\
 &=\frac{2(\Delta y)^2}{\sigma^2}(\frac{\sigma^2}{2}+\frac{X^{1}_f(\tau)-1}{\Delta\tau}-r)-(X^{1}_f(\tau)+1)({\Delta y}+1)+2 \label{ax1}
\end{align}
Second when  $n = 1$, and  with $X_f(0,0)=1$ then we have that
\begin{align}
 v^{1}_{2} & =-v^{1}_{1}+\frac{2(\Delta y)^2}{\sigma^2}(\frac{e^{\frac{\alpha}{1-\alpha}\Delta\tau} -1}{\Delta\tau\alpha} \sum_{k=1}^{1}(X_f(t_{n+1-k},0)-X_f(t_{n-k},0))e^{\frac{-\alpha}{1-\alpha}k\Delta\tau}\notag \\
 &+\frac{\sigma^2}{2}X_f(\tau)+\frac{X^{2}_f(\tau)-X^{1}_f(\tau)}{\Delta\tau}-r)-(X^{2}_f(\tau)+X^{1}_f(\tau))({\Delta y}+1)+2 \notag\\
 & =-v^{1}_{1}+\frac{2(\Delta y)^2}{\sigma^2}(\frac{e^{\frac{\alpha}{1-\alpha}\Delta\tau} -1}{\Delta\tau\alpha}(X_f(1,0)-X_f(0,0))e^{\frac{-\alpha}{1-\alpha}\Delta\tau}+\frac{\sigma^2}{2}X_f(\tau)\notag \\
 &+\frac{X^{2}_f(\tau)-X^{1}_f(\tau)}{\Delta\tau}-r)-(X^{2}_f(\tau)+X^{1}_f(\tau))({\Delta y}+1)+2 \notag \\
 &=-v^{1}_{1}+\frac{2(\Delta y)^2}{\sigma^2}(\frac{e^{\frac{\alpha}{1-\alpha}\Delta\tau} -1}{\Delta\tau\alpha}(X_f((1,0))-1))e^{\frac{-\alpha}{1-\alpha}\Delta\tau}+\frac{\sigma^2}{2}X_f(\tau) \notag \\
 &+\frac{X^{2}_f(\tau)-X^{1}_f(\tau)}{\Delta\tau}-r)-(X^{2}_f(\tau)+X^{1}_f(\tau))({\Delta y}+1)+2 \label{ax}
\end{align}
Collecting the results from equations \eqref{13b} to \eqref{ax} we deduce that 
\begin{align}
\mathbf{W_{n+1}}\mathbf{V_{n+1}}+\mathbf{R_{n+1}}=\mathbf{W_{n}}\mathbf{V_{n}}+\mathbf{L_{n}}+\Sigma_{n+1}
\label{axw}\end{align}
Where
\[
\mathbf{W_{n+1}}=
\begin{bmatrix} 
 B & C_{1}&0\hdots &\hdots&\hdots &\hdots \\
 0& A_{2} & B & C_{2}& 0\hdots &\hdots & \hdots &\hdots\\
0 & 0 & A_{3} & B & C_{3}&0 &\hdots &\hdots\\
0 & 0 & 0 & A_{4}& B & C_{4}&0\hdots\\
\vdots &\vdots &\vdots& \vdots & \vdots & \vdots & \ddots & \ddots\\
0 &0 &0  & 0 & 0&  0&A_{n+1}& B \\ 
\end{bmatrix}
\];
\[
\mathbf{V_{n+1}}=
\begin{bmatrix}
v^{1}_{n+1}\\ 
v^{2}_{n+1}\\
\vdots\\ 
\vdots\\ 
v^{M-1}_{n+1} 
\end{bmatrix};
\mathbf{L_{n}}=
\begin{bmatrix}
A_{n}v^{0}_{n}\\ 
0 \\
0\\
\vdots\\ 
\vdots\\ 
A_{n}v^{M}_{n} 
\end{bmatrix};
\mathbf{R_{n+1}}=
\begin{bmatrix}
A_{n+1}v^{0}_{n+1}\\ 
0 \\
0\\
\vdots\\ 
\vdots\\ 
A_{n+1}v^{M}_{n+1} 
\end{bmatrix};
\];
\[
\Sigma_{n+1}=\begin{bmatrix} 
\sum_{k=1}^{n}(e^{\frac{-\alpha}{1-\alpha}(n+1-k)\Delta\tau}-e^{\frac{-\alpha}{1-\alpha}(n-k)\Delta\tau})v^{1}_{k}\\
 \sum_{k=1}^{n}(e^{\frac{-\alpha}{1-\alpha}(n+1-k)\Delta\tau}-e^{\frac{-\alpha}{1-\alpha}(n-k)\Delta\tau})v^{2}_{k} \\
\sum_{k=1}^{n}(e^{\frac{-\alpha}{1-\alpha}(n+1-k)\Delta\tau}-e^{\frac{-\alpha}{1-\alpha}(n-k)\Delta\tau})v^{3}_{k}\\
\sum_{k=1}^{n}(e^{\frac{-\alpha}{1-\alpha}(n+1-k)\Delta\tau}-e^{\frac{-\alpha}{1-\alpha}(n-k)\Delta\tau})v^{4}_{k} \\
\vdots \\
\sum_{k=1}^{n}(e^{\frac{-\alpha}{1-\alpha}(n+1-k)\Delta\tau}-e^{\frac{-\alpha}{1-\alpha}(n-k)\Delta\tau})v^{M-1}_{k}\\ 
\end{bmatrix}\]
Hence \eqref{axw} becomes
\begin{align}
\mathbf{V_{n+1}}=\mathbf{W^{-1}_{n+1}}(\mathbf{W_{n}}\mathbf{V_{n}}+\mathbf{L_{n}}+\Sigma_{n+1}-\mathbf{R_{n+1}})
\label{axw1}\end{align}
The Crank-Nicolson discretization of the Caputo fractional differential equation of APO becomes equation \eqref{axw1}. Regarding the price and the free boundary condition, it was not linear at every time step.\\
By evaluating the $(n+1)$ term of \eqref{3.29}, we assert that using the scheme \eqref{13a} at $m=1$, $X^{n+1}_f$ can be expressed as,
\begin{align}
\begin{split}
X^{n+1}_f(\tau)=\frac{\Omega_1}{\Omega_2}
\end{split}
\end{align}
where
\begin{align}
\Omega_1= &\sum_{k=1}^{n}(v(t_{n+1-k},y_1)-v(t_{n-k},y_1))e^{\frac{-\alpha}{1-\alpha}k\Delta\tau}-\frac{\Delta\tau}{e^{\frac{\alpha}{1-\alpha}\Delta\tau} -1} (\frac{\alpha\sigma^2}{4(\Delta y)^2}(V^{2}_{n+1}+V^{0}_{n+1}+V^{2}_{n}+V^{0}_{n})\notag\\&+(1+r\frac{\Delta y^2}{\sigma^2})(\frac{e^{\frac{\alpha}{1-\alpha}\Delta\tau} -1}{\Delta\tau\alpha}\sum_{k=1}^{n}(X_f(t_{n+1-k},0)-X_f(t_{n-k},0))e^{\frac{-\alpha}{1-\alpha}k\Delta\tau}\notag\\&+\frac{X^{n}_f(\tau)}{\Delta\tau}-r)+X^{n}_f(\tau)({\Delta y}+1)+2) 
\end{align}\\
\begin{align}
 \Omega_2=\frac{\Delta\tau\alpha}{2(e^{\frac{\alpha}{1-\alpha}\Delta\tau} -1)}(\frac{V^{2}_{n+1}-V^{0}_{n+1}+V^{2}_{n}-V^{0}_{n}}{2\Delta\tau \Delta y X^{n}_f(\tau)}-(\frac{\sigma^2}{(\Delta y)^2}+r)(\frac{2(\Delta y)^2}{\Delta\tau\sigma^2}-1))
\end{align}\\
\begin{align}
\begin{split}
X^{n+1}_f(\tau)&=\frac{%
 \splitdfrac{\splitdfrac{\sum_{k=1}^{n}(v(t_{n+1-k},y_1)-v(t_{n-k},y_1))e^{\frac{-\alpha}{1-\alpha}k\Delta\tau}-\frac{\Delta\tau}{e^{\frac{\alpha}{1-\alpha}\Delta\tau} -1} (\frac{\alpha\sigma^2}{4(\Delta y)^2}(V^{2}_{n+1}+V^{0}_{n+1}+V^{2}_{n}+V^{0}_{n})}%
                {+\frac{\alpha}{4\Delta\tau(\Delta y)}(r-\frac{\sigma^2}{2}-1)(V^{2}_{n+1}-V^{0}_{n+1}+V^{2}_{n}-V^{0}_{n})+(1+r\frac{\Delta y^2}{\sigma^2})(\frac{e^{\frac{\alpha}{1-\alpha}\Delta\tau} -1}{\Delta\tau\alpha}}}%
                          { \sum_{k=1}^{n}(X_f(t_{n+1-k},0)-X_f(t_{n-k},0))e^{\frac{-\alpha}{1-\alpha}k\Delta\tau}+\frac{X^{n}_f(\tau)}{\Delta\tau}-r)+ X^{n}_f(\tau)({\Delta y}+1)+2)}}%
             {\frac{\Delta\tau\alpha}{2(e^{\frac{\alpha}{1-\alpha}\Delta\tau} -1)}\left(\frac{V^{2}_{n+1}-V^{0}_{n+1}+V^{2}_{n}-V^{0}_{n}}{2\Delta\tau \Delta y X^{n}_f(\tau)}-\left[\frac{\sigma^2}{(\Delta y)^2}+r\right]\left[\frac{2(\Delta y)^2}{\Delta\tau\sigma^2}-1\right]\right)}\label{x_f}
\end{split}
\end{align}
\red{Please explain why is the denominator cannot be zero}\\

After the expressions in \eqref{x_f}, the value of the term $X^{n+1}_f$ can be replaced in \eqref{13a}, \eqref{3.29}, and \eqref{eq11} to obtain the values of $V^{m}_{n+1}, 0\leq m \leq M$. Therefore, in the accomplished section, we develop the numerical scheme with a novel algorithm for equations \eqref{eq4} to \eqref{eq7} for any $n = 0 \cdots \cdots N-1$.\\
This section has focused on Crank-Nicolson's discretization of APO. The next part of this paper will discuss the consistency and stability of discretization.\\
\section{Positivity and Monotonicity} 
This section will demonstrate the decreasing and positivity of the free boundary and decreasing of numerical option price. Let's begin this part by showing coefficients $A_{n},B \text{   and   } C_{n} $  of the scheme \eqref{13b} are non-negative under appropriate conditions of the step size discretization $\Delta \tau \text{   and   }\Delta y $.
\begin{Lem}(Company et al.\cite{egorova2014solving})\label{lemma1} 
If $\Delta \tau \text{   and   }\Delta y $ satisfy the following the following inequalities:
 \begin{align}
\Delta y \leq \frac{\sigma^2 \Delta \tau}{\vert r-\frac{\sigma^2}{2}\vert},r \not=\frac{\sigma^2}{2} \\
\Delta \tau \leq \frac{\Delta y^2}{r\Delta y^2+\sigma^2} \label{post}
 \end{align}
 then the coefficients $A_{n},B \text{   and   } C_{n} $ are non-negative. If $ r=\frac{\sigma^2}{2} $,then the non-negativity of these coefficients is  verified under the condition \eqref{post}.
\end{Lem}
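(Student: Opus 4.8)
The plan is to factor out of $A_n$, $B$ and $C_n$ the common prefactor
\[
\kappa \;=\; \frac{\Delta\tau\,\alpha}{e^{\frac{\alpha}{1-\alpha}\Delta\tau}-1},
\]
so that the sign of each coefficient is controlled solely by the bracketed expression multiplying $\kappa$. The first step is to check $\kappa>0$: since $0<\alpha<1$ we have $\frac{\alpha}{1-\alpha}>0$, hence $e^{\frac{\alpha}{1-\alpha}\Delta\tau}>1$ and the denominator is strictly positive, while the numerator is manifestly positive. This reduces the lemma to three scalar sign conditions and, as a by-product, shows the positivity is independent of the fractional order $\alpha$, in agreement with the cited reference.

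I would then treat $A_n$ and $C_n$ simultaneously. Introducing the discrete free-boundary velocity
\[
\delta_n \;:=\; \frac{X^{n+1}_f-X^{n}_f}{\Delta\tau\,X^{n}_f},
\]
the non-negativity of both $A_n$ and $C_n$ is equivalent to the single two-sided estimate
\[
\Bigl|\,\bigl(r-\tfrac{\sigma^2}{2}\bigr)+\delta_n\,\Bigr|\;\le\;\frac{\sigma^2}{\Delta y},
\]
that is, the total (convective plus free-boundary) drift must be dominated by the diffusion term $\sigma^2/(\Delta y)^2$. For the deterministic part I would invoke the first inequality of the lemma, which rearranges to $|r-\tfrac{\sigma^2}{2}|\,\Delta y\le \sigma^2\Delta\tau$ and so bounds the $(r-\tfrac{\sigma^2}{2})$ contribution; when $r=\tfrac{\sigma^2}{2}$ this contribution vanishes outright, which is exactly why the statement singles out that case and drops the first inequality there.

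Next I would turn to the diagonal coefficient $B$ and the role of \eqref{post}. Here the bracket is $\frac{\sigma^2}{(\Delta y)^2}+r>0$, so its sign is immediate, and the purpose of \eqref{post} is quantitative: rearranged it reads $\Delta\tau\bigl(\tfrac{\sigma^2}{(\Delta y)^2}+r\bigr)\le 1$, a CFL-type bound tying the magnitude of the central term to the time step, which is precisely what must hold for the assembled system to keep the monotone (M-matrix) sign pattern at the interior nodes. Collecting the three sign checks then yields the claim under the two stated restrictions, and under \eqref{post} alone in the borderline case $r=\tfrac{\sigma^2}{2}$.

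The step I expect to be the main obstacle is the free-boundary velocity $\delta_n$ inside $A_n$ and $C_n$. Unlike the convective coefficient $r-\tfrac{\sigma^2}{2}$, it is not controlled by the problem data, so the two-sided estimate above cannot be closed from the two stated inequalities by themselves. I would discharge it by importing an a priori bound on $\delta_n$ coming from the positivity and monotonic decrease of the discrete free boundary proved in the remainder of this section: monotonicity forces $X^{n+1}_f-X^{n}_f\le 0$ and positivity keeps $X^{n}_f$ bounded away from zero, together bounding $|\delta_n|$. Because coefficient positivity and boundary monotonicity thus lean on one another, the real work is organizing them as a single bootstrapped induction rather than proving any individual inequality.
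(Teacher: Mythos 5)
The paper gives no proof of this lemma at all (it is quoted from Company et al.), so your proposal can only be judged on its own terms, and on those terms it has two genuine gaps. The first concerns $B$. You correctly establish that the common prefactor $\kappa=\Delta\tau\alpha/(e^{\frac{\alpha}{1-\alpha}\Delta\tau}-1)$ is positive, but with the paper's definition $B=-\frac{\kappa}{2}\bigl(\frac{\sigma^2}{(\Delta y)^2}+r\bigr)$ this immediately forces $B<0$ for every choice of $\Delta\tau$ and $\Delta y$: there is no additive ``$1$'' in this $B$, unlike the explicit-scheme coefficient $b=1-\frac{\sigma^2\Delta\tau}{(\Delta y)^2}-r\Delta\tau$ of Company et al., for which \eqref{post} is exactly the right CFL bound. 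Your remark that the sign of $B$ ``is immediate'' and that \eqref{post} rearranges to $\Delta\tau(\sigma^2/(\Delta y)^2+r)\le 1$ therefore does not prove $B\ge 0$; it proves the opposite, so either the lemma must be restated for a differently normalized coefficient (for instance the quantity $e^{-\frac{\alpha}{1-\alpha}\Delta\tau}-B$ that actually appears in the recursions following \eqref{13b}) or the claim about $B$ is false for this scheme. A proof attempt has to confront this rather than pass over it.

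The second gap is the one you flag yourself: the free-boundary velocity $\delta_n=(X^{n+1}_f-X^n_f)/(\Delta\tau X^n_f)$ inside $A_n$ and $C_n$. Your reduction of $A_n,C_n\ge 0$ to $\bigl|(r-\tfrac{\sigma^2}{2})+\delta_n\bigr|\le\sigma^2/\Delta y$ is correct, and the first hypothesis does control the $(r-\tfrac{\sigma^2}{2})$ part (though note it yields $|r-\tfrac{\sigma^2}{2}|\le\sigma^2\Delta\tau/\Delta y$, which only dominates the needed bound when $\Delta\tau\le 1$ and leaves no slack for $\delta_n$). But your proposed way of discharging $\delta_n$ does not close: monotone decrease gives $\delta_n\le 0$, which helps $C_n$ but works \emph{against} $A_n$, and positivity of $X^n_f$ does not keep it bounded away from zero uniformly in $n$, so those two facts give no lower bound on $\delta_n$. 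One genuinely needs either a quantitative Lipschitz-type bound on the discrete free boundary, or (as in Company et al.) a formulation in which the free-boundary term is kept out of the coefficient-positivity lemma and absorbed into the joint induction that proves monotonicity of $X^n_f$ and $V^m_n$ simultaneously. As written, your argument is circular and the circle is not actually closed.
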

\begin{Thm}(Company et al. \cite{egorova2014solving})
Let $(V^{m}_{n}, X^{n}_{f})$ be the computed numerical solution and $X^{n}_{f}$ be defined by equations \eqref{x_f}. Under the hypothesis of Lemma \eqref{lemma1}, for sufficiently small values of  $\Delta y $, we have:  
\begin{itemize}
  \item $X^{n}_{f}$ is positive and non-increasing monotone for $n = 0, 1,\hdots, N$;
  \item the vectors $V^{m}_{n}$  have positive components for $n = 0, 1,\hdots, N$;
  \item the vectors $V^{m}_{n}$  are non-increasing monotone with respect to $m$ for each fixed $n = 0, 1,\hdots, N.$
\end{itemize}
\end{Thm}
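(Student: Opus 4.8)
The plan is to argue by induction on the time index $n$, establishing the three assertions \emph{simultaneously}, since the free boundary $X^n_f$ and the price vector $V^m_n$ are coupled through the nonlinear update \eqref{x_f} and cannot be treated separately. For the base case $n=0$ the initial and boundary data give $X^0_f=1>0$ and $V^m_0=0$, so positivity holds in the weak (non-negative) sense and monotonicity in $m$ is trivial; strict positivity of the interior components is then picked up at the first genuine time step from the boundary value $v^0_n=1-X^n_f>0$, valid once $X^n_f<1$. I would state the inductive hypothesis as: for every $k\leq n$ one has $0<X^k_f\leq X^{k-1}_f\leq 1$, the components of $V_k$ are non-negative, and $V^m_k$ is non-increasing in $m$.

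The analytic core is the monotone (M-matrix) structure of the iteration matrix $\mathbf{W_{n+1}}$ in \eqref{axw1}. By Lemma~\ref{lemma1} the off-diagonal coefficients $A_n,C_n$ are non-negative, while the diagonal entry $B$ is negative; comparing the row sums gives $|B|-(A_n+C_n)=\frac{\Delta\tau\,\alpha}{e^{\alpha\Delta\tau/(1-\alpha)}-1}\cdot\frac{r}{2}>0$ because $r>0$, so $-\mathbf{W_{n+1}}$ is strictly diagonally dominant with positive diagonal and non-positive off-diagonals. Hence $-\mathbf{W_{n+1}}$ is an M-matrix and $\mathbf{W_{n+1}^{-1}}$ has entrywise non-positive entries. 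Writing $\mathbf{V_{n+1}}=\mathbf{W_{n+1}^{-1}}\big(\mathbf{W_n}\mathbf{V_n}+\mathbf{L_n}+\Sigma_{n+1}-\mathbf{R_{n+1}}\big)$, positivity of $\mathbf{V_{n+1}}$ reduces to showing that the right-hand bracket is entrywise non-positive. Here the inductive hypothesis controls $\mathbf{W_n}\mathbf{V_n}$ and the boundary contributions $\mathbf{L_n},\mathbf{R_{n+1}}$, while the Caputo--Fabrizio history term $\Sigma_{n+1}$, whose weights $e^{\frac{-\alpha}{1-\alpha}(n+1-k)\Delta\tau}-e^{\frac{-\alpha}{1-\alpha}(n-k)\Delta\tau}$ are negative, must be estimated against the non-negative past values; tracking these telescoping exponential weights is the most delicate bookkeeping in this step.

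For the free boundary I would analyse \eqref{x_f} directly. Positivity $X^{n+1}_f>0$ follows by showing that $\Omega_1$ and $\Omega_2$ share the same sign; since both are dominated, as $\Delta y\to 0$, by the terms of order $(\Delta y)^{-2}$ coming from the diffusion coefficient $\sigma^2/(\Delta y)^2$, their leading behaviour is fixed by the same positive factor, which both explains the hypothesis ``for sufficiently small $\Delta y$'' and simultaneously guarantees $\Omega_2\neq 0$. For the monotonicity $X^{n+1}_f\leq X^n_f$ I would form the difference, substitute the inductive bounds $V^m_k\geq 0$ and $X^k_f\leq X^{k-1}_f$, and verify that the leading-order term in $\Delta y$ has the correct non-positive sign, the subleading terms being absorbed by taking $\Delta y$ small.

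Finally, monotonicity of $\mathbf{V_{n+1}}$ in $m$ follows cleanly because the scheme coefficients $A_n,B,C_n$ do not depend on the spatial index $m$. Setting $D^m_{n+1}=V^m_{n+1}-V^{m+1}_{n+1}$, the differences satisfy the same tridiagonal recurrence governed by $\mathbf{W_{n+1}}$, with boundary data $D^0_{n+1}=v^0_{n+1}-v^1_{n+1}\geq 0$ (using $v^0_{n+1}=1-X^{n+1}_f$ together with the bound already obtained on $v^1_{n+1}$) and $D^{M-1}_{n+1}=v^{M-1}_{n+1}-v^{M}_{n+1}=v^{M-1}_{n+1}\geq 0$; the same M-matrix monotonicity then propagates non-negativity of $D^m_{n+1}$ through the interior. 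The main obstacle throughout is the coupling: because $A_n$ and $C_n$ contain the increment $X^{n+1}_f-X^n_f$, the linear solve for $\mathbf{V_{n+1}}$ and the nonlinear solve for $X^{n+1}_f$ are intertwined, so the three properties must be closed off together within a single induction, and the entire argument hinges on the smallness of $\Delta y$ to control the sign of the free-boundary update.
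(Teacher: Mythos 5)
You should first be aware that the paper contains \emph{no proof} of this theorem: it is stated with the attribution ``(Company et al.~\cite{egorova2014solving})'' and the text moves straight on to Section~4. Moreover, the cited result of Company et al.\ concerns an \emph{explicit} front-fixing scheme for the classical (non-fractional) Black--Scholes model, where positivity and monotonicity follow almost immediately once the coefficients are shown to be non-negative; it does not automatically cover the present implicit Crank--Nicolson scheme with the Caputo--Fabrizio memory term $\Sigma_{n+1}$. So your decision to build a genuine induction around the M-matrix structure of $\mathbf{W_{n+1}}$ in \eqref{axw1} is the right instinct, and it is more than the paper itself supplies. Your row-sum computation $|B|-(A_n+C_n)=\frac{\Delta\tau\alpha}{e^{\alpha\Delta\tau/(1-\alpha)}-1}\cdot\frac{r}{2}>0$ is correct, and the reduction ``$\mathbf{W_{n+1}^{-1}}\le 0$ entrywise, hence positivity of $\mathbf{V_{n+1}}$ is equivalent to the bracket being entrywise non-positive'' is sound.

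The gaps are in precisely the places you flag as ``delicate bookkeeping'' and then do not carry out, and they are not cosmetic. First, the sign of $\mathbf{W_{n}}\mathbf{V_{n}}+\mathbf{L_{n}}+\Sigma_{n+1}-\mathbf{R_{n+1}}$ is genuinely indeterminate under your stated inductive hypothesis: a typical interior row of $\mathbf{W_n}\mathbf{V_n}$ is $A_n v^{m+1}_n+Bv^m_n+C_nv^{m-1}_n$ with $A_n,C_n\ge 0$, $B<0$ and $|B|>A_n+C_n$ only by the margin $O(r\Delta\tau)$, while $\mathbf{L_n}$ contributes the \emph{non-negative} term $A_nv^0_n$ in the first row; whether the negative history weights in $\Sigma_{n+1}$ and the $-\mathbf{R_{n+1}}$ term suffice to close the sign is exactly the content of the theorem, and asserting it does not prove it. You need an explicit estimate here, e.g.\ bounding $A_nv^{m+1}_n+C_nv^{m-1}_n\le (A_n+C_n)v^{m-1}_n$ using monotonicity in $m$ and then exploiting the diagonal-dominance margin. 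Second, your treatment of \eqref{x_f} is circular in one spot: $\Omega_1$ and $\Omega_2$ both contain $V^2_{n+1}$ and $V^0_{n+1}$, i.e.\ values at the \emph{new} time level whose positivity you have not yet established at that stage of the induction, so the order in which $X^{n+1}_f$ and $\mathbf{V_{n+1}}$ are resolved inside one inductive step must be made explicit (the scheme as written is a coupled nonlinear system, not a sequential update). Third, the claim that $\Omega_1$ and $\Omega_2$ are both dominated by the same positive $\sigma^2/(\Delta y)^2$ factor needs checking against the relation $\Delta\tau=\mu(\Delta y)^2$: the term $(\frac{\sigma^2}{(\Delta y)^2}+r)(\frac{2(\Delta y)^2}{\Delta\tau\sigma^2}-1)$ in $\Omega_2$ is $O((\Delta y)^{-2})$, but the difference quotient $\frac{V^2_{n+1}-V^0_{n+1}+V^2_n-V^0_n}{2\Delta\tau\Delta y X^n_f}$ is a priori $O((\Delta y)^{-3})$ unless you first prove $V^2-V^0=O(\Delta y)$, so ``sufficiently small $\Delta y$'' does not by itself fix the sign of the denominator. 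Until these three points are closed, the argument is a plausible programme rather than a proof.
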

\section{Consistency and Stability}
The consistency and stability of the Crank-Nicolson method of Caputo Fabrizio American put option \eqref{13a} discussed in this section.\\ 
\subsection{Consistency}
\begin{Def}
Let $\mathcal{F}(V^{m+1}_n,X^{n+1}_{f})$ be the difference operator approximating $\mathcal{L}(V,X_{f})$. The
local truncation error $ \mathcal{T}^{m+1}_{n}(\bar{V},\bar{X_{f}})$ is given by the leading terms of the Taylor expansion of $\mathcal{F}(V^{m+1}_n,X^{n+1}_{f})=0$,where $u$ satisfies $\mathcal{L}(V,X_{f})=0 $.That  means 
\end{Def}
\begin{align}
\mathcal{T}^{m+1}_{n}(\bar{V},\bar{X_{f}})=\mathcal{F}(V^{m+1}_n,X^{n+1}_{f}),\text{  and} \notag \\
 \lim_{(\Delta \tau,\Delta y)\rightarrow (0,0)}\mathcal{T}^{m+1}_{n}(\bar{V},\bar{X_{f}})=0.
\end{align}
Let's start by choosing an arbitrary location$(y,\tau) \in [0,4E]\times [0,T]$ the mesh point $(y^{m},\tau^{n+1})$ to study the consistency of Caputo Fabrizio PDE \eqref {eq4} and its numerical scheme  \eqref{13a}  employing Crank-Nicolson finite difference  method. Let $\bar{V}^{m+1}_{n}=V(\tau^{n},y_{m+1})$ be the exact solution for  Caputo Fabrizio PDE, and $\bar{X}^{n+1}_{f}=X_{f}(\tau^{n+1})$ the free boundary respectively. That means
\begin{align*}
\mathcal{L}(V,X_{f})=^c_0D^\alpha _t v(\tau,y) -\frac{1}{2}\sigma^2 \frac{\partial^2 v}{\partial^2 y}+(r-\frac{\sigma^2}{2})\frac{\partial v}{\partial y}-\frac{1}{X_f(\tau)}\frac{dX_f(\tau)}{d\tau}\frac{\partial v}{\partial y}-rv=0\\
\end{align*}
and 
\begin{align*}
  \mathcal{F}(V^{m+1}_n,X^{n+1}_{f})=&\frac{1}{\Delta\tau\alpha}(e^{\frac{\alpha}{1-\alpha}\Delta\tau}-1) \sum_{k=1}^{n}(v(t_{n+1-k},y_m)-v(t_{n-k},y_m))e^{\frac{-\alpha}{1-\alpha}k\Delta\tau} \notag \\&=\frac{1}{2}(\frac{1}{2}\sigma^2 \frac{v^{m+1}_{n+1}-2v^{m}_{n+1}+v^{m-1}_{n+1}}{(\Delta y)^2} +(\frac{\sigma^2}{2}-r)\frac{v^{m+1}_{n+1}-v^{m-1}_{n+1}}{2\Delta y}\notag \\&+\frac{X^{n+1}_f(\tau)-X^{n}_f(\tau)}{\Delta\tau X^{n}_f(\tau)}\frac{v^{m+1}_{n+1}-v^{m-1}_{n+1}}{2\Delta y}+rv^{m+1}_{n+1}+\frac{1}{2}\sigma^2 \frac{v^{m+1}_{n}-2v^{m}_{n}+v^{m-1}_{n}}{(\Delta y)^2}\notag \\&+(\frac{\sigma^2}{2}-r)\frac{v^{m+1}_{n}-v^{m-1}_{n}}{2\Delta y}+\frac{X^{n+1}_f(\tau)-X^{n}_f(\tau)}{\Delta\tau X^{n}_f(\tau)}\frac{v^{m+1}_{n}-v^{m-1}_{n}}{2\Delta y}+rv^{m}_{n}). \label{3.3a}\\   
\end{align*}
\begin{Thm}
The Crank-Nicolson Caputo Fabrizio fractional derivatives scheme method defined by Equations \eqref{13a} is consistent with the fixed domain model  \eqref{eq4}  of order $ \mathcal{O}(\Delta \tau)+\mathcal{O}((\Delta y)^2) $.
\end{Thm}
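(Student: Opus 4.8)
The plan is to substitute the exact solution $(\bar{V},\bar{X}_f)$ into the difference operator $\mathcal{F}$ and to Taylor-expand every constituent quotient about the half-time node $(\tau^{n+1/2},y_m)$. Since the exact solution satisfies $\mathcal{L}(\bar{V},\bar{X}_f)=0$, the local truncation error reduces to $\mathcal{T}^{m+1}_{n}=\mathcal{F}(\bar{V}^{m+1}_n,\bar{X}^{n+1}_f)=\mathcal{F}(\bar{V},\bar{X}_f)-\mathcal{L}(\bar{V},\bar{X}_f)$, i.e. to the sum of the truncation errors of the individual difference approximations. I would estimate the time-fractional term and the spatial Crank--Nicolson terms separately and then collect the leading orders.

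First I would handle the spatial part. The central difference $\frac{v^{m+1}-2v^{m}+v^{m-1}}{(\Delta y)^2}$ approximates $\partial^2_y v$ with error $\mathcal{O}((\Delta y)^2)$, and $\frac{v^{m+1}-v^{m-1}}{2\Delta y}$ approximates $\partial_y v$ with error $\mathcal{O}((\Delta y)^2)$, both by the standard Taylor expansion of a smooth $v$ in $y$. The Crank--Nicolson averaging $\tfrac12(\,\cdot\,)_{n+1}+\tfrac12(\,\cdot\,)_{n}$ of the spatial operator reproduces its value at $\tau^{n+1/2}$ up to $\mathcal{O}((\Delta\tau)^2)$, again by expansion in $\tau$ about the midpoint. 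Hence the three linear contributions $\tfrac12\sigma^2\partial^2_y v$, $(r-\tfrac{\sigma^2}{2})\partial_y v$ and $-rv$ are each reproduced with error $\mathcal{O}((\Delta y)^2)+\mathcal{O}((\Delta\tau)^2)$.

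Next I would treat the two terms that carry only first order in time. The fractional operator $^c_0D^\alpha _t v$ is discretized by \eqref{eq12}, whose local truncation error is $\mathcal{TD}^{m}_{n}=\mathcal{O}(\Delta\tau)$ by the result of Qureshi et al. invoked after \eqref{eq12}; I would simply cite this. The nonlinear convection coefficient $\frac{1}{X_f}\frac{dX_f}{d\tau}$ is replaced by the forward quotient $\frac{X^{n+1}_f-X^{n}_f}{\Delta\tau\,X^{n}_f}$, which is a first-order approximation and therefore carries error $\mathcal{O}(\Delta\tau)$, provided $X_f$ is $C^1$ and bounded away from zero; the latter is guaranteed by the positivity of $X_f$ established in the preceding section. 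Because this coefficient multiplies $\partial_y v$, I would argue the product error is additive: writing the discrete coefficient as $\frac{1}{X_f}\frac{dX_f}{d\tau}+\mathcal{O}(\Delta\tau)$ and the discrete gradient as $\partial_y v+\mathcal{O}((\Delta y)^2)$, their product equals the exact convection term plus $\mathcal{O}(\Delta\tau)+\mathcal{O}((\Delta y)^2)$, using boundedness of the exact factors.

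Finally I would collect all contributions into $\mathcal{T}^{m+1}_{n}$: the fractional-time quotient supplies $\mathcal{O}(\Delta\tau)$, the nonlinear coefficient supplies $\mathcal{O}(\Delta\tau)$, and the central differences together with the Crank--Nicolson averaging supply $\mathcal{O}((\Delta y)^2)+\mathcal{O}((\Delta\tau)^2)$; since $(\Delta\tau)^2$ is absorbed into $\mathcal{O}(\Delta\tau)$, the total is $\mathcal{O}(\Delta\tau)+\mathcal{O}((\Delta y)^2)$, and in particular $\mathcal{T}^{m+1}_{n}\to 0$ as $(\Delta\tau,\Delta y)\to(0,0)$, which is exactly consistency of the claimed order. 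The main obstacle I anticipate is the bookkeeping around the nonlinear coefficient together with a mismatch of time-centering: the fractional quotient \eqref{eq12} is anchored at $t_n$ while the spatial average is centered at $\tau^{n+1/2}$, so I would need to verify explicitly that this half-step discrepancy injects no error worse than $\mathcal{O}(\Delta\tau)$ rather than simply assuming it.
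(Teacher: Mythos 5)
Your proposal follows essentially the same route as the paper's proof: a term-by-term Taylor expansion of the scheme, citing Qureshi et al.\ for the $\mathcal{O}(\Delta \tau)$ error of the Caputo--Fabrizio quotient, standard $\mathcal{O}((\Delta y)^2)$ estimates for the central differences, and an additive product-error argument for the nonlinear convection coefficient $\frac{1}{X_f}\frac{dX_f}{d\tau}\,\partial_y v$, all summed to give $\mathcal{O}(\Delta \tau)+\mathcal{O}((\Delta y)^2)$. The only substantive differences are that the paper additionally computes the local truncation errors of the three discretized boundary conditions (which you omit), while your explicit flagging of the half-step centering mismatch between the fractional quotient and the Crank--Nicolson spatial average is a point the paper passes over silently.
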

\begin{proof}
We implement Taylor's expansion similar to the techniques used in \cite{fazio2021front}, taking into account the continuous PD of orders two and four in space and time, respectively to show consistency of \eqref{eq8}. First we will find the consistency of the Caputo Fabrizio differential equation in some parts of  \eqref{eq8}. By following the lines of \cite{qureshi2019new} we can compute that the first order time-fractional derivatives of scheme equation \eqref{eq8} of the Caputo Fabrizio differential equation which is given by
\begin{align}
\begin{split}
^c_0D^\alpha _t v(\tau=t_n,y=y_m) =\frac{1}{\Delta\tau\alpha}(e^{\frac{\alpha}{1-\alpha}\Delta\tau} -1) \sum_{k=1}^{n}(v(t_{n+1-k},y_m)-v(t_{n-k},y_m))e^{\frac{-\alpha}{1-\alpha}k\Delta\tau}.
\end{split}
\end{align}
with truncation error of\\
\begin{align}
\mathbf{e_1}=\mathcal{O}(\Delta\tau)\label{tr1}
\end{align}
Second, let us find the consistency of each of the last three right-hand side equations of \eqref{eq8} one by one, using Taylor's expansion up to order two  and four in time and space, respectively.
First, let us find the truncation error of the first term of the right-hand side of \eqref{eq8}
\begin{align}
\frac{1}{2}\sigma^2 \frac{\partial^2 v}{\partial^2 y}&=\frac{1}{4}\sigma^2 (\frac{v^{m+1}_{n+1}-2v^{m}_{n+1}+v^{m-1}_{n+1}}{(\Delta y)^2} + \frac{v^{m+1}_{n}-2v^{m}_{n}+v^{m-1}_{n}}{(\Delta y)^2})\notag\\&=\frac{1}{2}\sigma^2(\frac{\partial^2 v}{\partial^2 y}+(\Delta y)^2\frac{1}{12}\frac{\partial^4 v}{\partial^4 y}+\mathcal{O}(\Delta y)^3).\label{ee1}  
\end{align}
Therefore the error term of \eqref{ee1} becomes
\begin{align}
\mathbf{e_2}=(\Delta y)^2\frac{1}{12}\frac{\partial^4 v}{\partial^4 y}+\mathcal{O}(\Delta y)^3\label{tr2}
\end{align}
Second, let us find the truncation error of the second expression right-hand side of \eqref{eq8}
\begin{align}
(\frac{\sigma^2}{2}-r)\frac{\partial v}{\partial y}=\frac{1}{2}(\frac{\sigma^2}{2}-r)(\frac{v^{m+1}_{n+1}-v^{m-1}_{n+1}}{2\Delta y}+\frac{v^{m+1}_{n}-v^{m-1}_{n}}{2\Delta y})\notag \\=
 \frac{1}{2}(\frac{\sigma^2}{2}-r)(\frac{\partial v}{\partial y}+(\Delta y)^2\frac{1}{6}\frac{\partial^3 v}{\partial^3 y} +\mathcal{O}(\Delta y)^4).\label{ee2} 
\end{align}
Therefore the error term of \eqref{ee2} becomes
\begin{equation}
\mathbf{e_3}=(\Delta y)^2\frac{1}{6}\frac{\partial^3 v}{\partial^3 y}+\mathcal{O}(\Delta y)^4\label{tr3}
\end{equation}
Next, let us find the truncation error of the third  expression of the right-hand side of \eqref{eq8}
\begin{align}
\frac{1}{X_f(\tau)}\frac{dX_f(\tau)}{d\tau}\frac{\partial v}{\partial y}&=\frac{1}{2}(\frac{X^{n+1}_f(\tau)-X^{n}_f(\tau)}{\Delta\tau X_f(\tau)}(\frac{v^{m+1}_{n+1}-v^{m-1}_{n+1}}{2\Delta y}+\frac{v^{m+1}_{n}-v^{m-1}_{n}}{2\Delta y})\notag\\&=\frac{1}{X_f(\tau)}(\frac{dX_f(\tau)}{d\tau}+(\Delta \tau)^2\frac{d^2X_f(\tau)}{d\tau^2}+\mathcal{O}((\Delta \tau)^2))(\frac{\partial v}{\partial y}+(\Delta y)^2\frac{1}{6}\frac{\partial^3 v}{\partial^3 y}+\mathcal{O}(\Delta y)^4)\notag\\&=\frac{1}{X_f(\tau)}\frac{dX_f(\tau)}{d\tau}\frac{\partial v}{\partial y}+\frac{1}{X_f(\tau)}\frac{dX_f(\tau)}{d\tau}((\Delta y)^2\frac{1}{6}\frac{\partial^3 v}{\partial^3 y} +\mathcal{O}(\Delta y)^4)
\notag\\&+\frac{1}{X_f(\tau)}((\Delta \tau)^2\frac{d^2X_f(\tau)}{d\tau^2}+\mathcal{O}((\Delta \tau)^2))(\frac{\partial v}{\partial y}+(\Delta y)^2\frac{1}{6}\frac{\partial^3 v}{\partial^3 y} +\mathcal{O}(\Delta y)^4).\label{ee3}
\end{align}
Therefore the error term of \eqref{ee3} becomes
\begin{align}
\mathbf{e_4}&=\frac{1}{X_f(\tau)}\frac{dX_f(\tau)}{d\tau}((\Delta y)^2\frac{1}{6}\frac{\partial^3 v}{\partial^3 y} +\mathcal{O}(\Delta y)^4)\notag\\&
+\frac{1}{X_f(\tau)}((\Delta \tau)^2\frac{d^2X_f(\tau)}{d\tau^2}+\mathcal{O}((\Delta \tau)^2))(\frac{\partial v}{\partial y}+(\Delta y)^2\frac{1}{6}\frac{\partial^3 v}{\partial^3 y} +\mathcal{O}(\Delta y)^4)\label{tr4}
\end{align}
Lastly, we can find the truncation error of the last term of the right-hand side of \eqref{eq8}
\begin{align}
rv = \frac{1}{2}(rv^{m}_{n+1}+rv^{m}_{n})\label{tr5}
\end{align}
Hence,the local truncation error of \eqref{eq8}  using \eqref{tr1} to \eqref{tr5} is provided by
\begin{align}
 \mathcal{T}^{m+1}_{n}(\bar{V},\bar{X_{f}})&=\mathbf{e_1}+\mathbf{e_2}+\mathbf{e_3}+\mathbf{e_4}
 =\mathcal{O}(\Delta \tau)+\mathcal{O}((\Delta y)^2) \label{tr2121} 
\end{align}
Now let us find the local truncation error of the boundary conditions of equations \eqref{eq7} and \eqref{eq17}
\begin{align}
&\mathcal{L}_1(V,X_{f})=v(\tau,0)-1+X_f(\tau)=0 \notag\\	
&\mathcal{L}_2(V,X_{f})=\frac{\partial v}{\partial y}(\tau,0)+X_f(\tau)=0 \notag\\
&\mathcal{L}_3(V,X_{f})=^c_0D^\alpha _t (1-X_f(\tau))-\frac{1}{2}\sigma^2 \frac{\partial^2 v}{\partial^2 y}(\tau,0)+\frac{\sigma^2}{2}X_f(\tau)-X^{'}_f(\tau)-r=0.\label{tr12a}
 \end{align}
The numerical boundary scheme is given by
\begin{align}
&\mathcal{F}_1(V^{0}_{n+1},X^{n+1}_{f})=V^{0}_{n+1}-1+X^{n+1}_{f} \notag\\
&\mathcal{F}_2(V^{0}_{n+1},X^{n+1}_{f})=\frac{v^{1}_{n+1}-v^{-1}_{n+1}+v^{1}_{n}-v^{-1}_{n}}{2\Delta y}+X^{n+1}_{f}\notag\\
&\mathcal{F}_3(V^{0}_{n+1},X^{n+1}_{f})=\frac{1}{\Delta\tau\alpha}(e^{\frac{\alpha}{1-\alpha}\Delta\tau} -1)\sum_{k=1}^{n}(X_f(t_{n+1-k},0)-X_f(t_{n-k},0))e^{\frac{-\alpha}{1-\alpha}k\Delta\tau}\notag-\\&\frac{1}{4}\sigma^2( \frac{v^{1}_{n+1}-2v^{0}_{n+1}+v^{-1}_{n+1}}{(\Delta y)^2} +\frac{v^{1}_{n}-2v^{0}_{n}+v^{-1}_{n}}{(\Delta y)^2})+\frac{\sigma^2}{2}X_f(\tau)+\frac{X^{n+1}_f(\tau)-X^{n}_f(\tau)}{\Delta\tau}-r=0
\label{12}.
\end{align}
By following the same steps as in \eqref{tr1} to \eqref{tr5} for \eqref{tr12a} and  \eqref{12}, we obtained a local truncation error
\begin{align}
\mathcal{O}(\Delta \tau)+\mathcal{O}((\Delta y)^2) \label{13} 
\end{align}
That means
\begin{align*}
&\mathcal{T}_1^{0}(\bar{V},\bar{X_{f}}) = \mathcal{F}_1(V^{0}_{n+1},X^{n+1}_{f})=0 \notag\\
&\mathcal{T}_2^{0}(\bar{V},\bar{X_{f}})=(\Delta y)^2\frac{1}{3}\frac{\partial^3 v}{\partial^3 y}+\mathcal{O}(\Delta y)^4 \notag\\
&\mathcal{T}_3^{0}(\bar{V},\bar{X_{f}})=\mathcal{O}(\Delta \tau)+\mathcal{O}((\Delta y)^2).
\end{align*}
As a result of \eqref{tr2121} and \eqref{13}, the Crank-Nicolson method provided by \eqref{13a} is compatible with the fixed domain model \eqref{eq4} of order 
\begin{align*}
 \mathcal{O}(\Delta \tau)+\mathcal{O}((\Delta y)^2).   
\end{align*}
\end{proof}
\subsection{Stability}
The boundedness of the solution of a finite difference scheme the partial differential equation affects its stability. For more details we refer for instance to the works in \cite{hoffman2018numerical,hoffmann2000computational,smith1985numerical}. Namely, stability of a partial differential equation (PDE) is determined by the boundedness of its solution when using a finite difference scheme. Note that Various methods, such as the discrete perturbation method, the Von Neumann method, and the matrix method \cite{ hoffmann2000computational}. are employed to analyze the stability of numerical schemes for PDEs. In this paper, we apply the Von Neumann technique \cite{mphephu2013numerical,smith1985numerical} to analyze the stability of time fractional PDEs using the Crank-Nicolson numerical scheme.

Von Neumann analysis which is an application of Fourier analysis, is employed to analyze a finite linear scheme with constant coefficients, see the results by Strikwerda in \cite{strikwerda2004finite} and Smith in \cite{smith1985numerical}. This limitation is addressed in the case of nonlinear problems by linearizing and fixing the coefficients, thereby considering the problem locally, as discussed by Aslak and Ragnar  \cite{mphephu2013numerical}.

The stability of a partial differential equation (PDE) is tested using Von Neumann analysis as follows: first, if the PDE is not already linear, it is converted to a linear form. Then, in a one-step finite numerical scheme, the term $U_{n}$ is replaced with $\lambda e^{imy}$ for every value of $n$ and $m$. Finally, we solve for $\lambda$. The term $\lambda$ is referred to as the amplification factor of the finite numerical scheme, and it determines the stability of the scheme. According to Hoffmann and Chiang \cite{hoffmann2000computational} and Smith \cite{smith1985numerical} if $ \lambda \leq 1 $  then we assert that finite difference scheme is stable.
\begin{Thm}
The Crank-Nicolson Caputo Fabrizio fractional derivatives scheme method defined by Equations \eqref{13b} is unconditionally stable.
\end{Thm}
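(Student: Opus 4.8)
The plan is to carry out the Von Neumann (discrete Fourier) analysis announced above. Since \eqref{13b} has variable coefficients (the entries $A_n$ and $C_n$ depend on the free boundary $X^n_f$), I would first freeze the coefficients at a generic time level and study the resulting constant-coefficient \emph{linear} scheme locally, as legitimised by the linearisation argument of \cite{mphephu2013numerical}. Into this frozen scheme I insert the elementary Fourier mode $v^m_n=\lambda^{n}e^{im\theta}$, where $\theta=\beta\,\Delta y$ is the phase and $\lambda$ is the amplification factor; unconditional stability then amounts to showing $|\lambda|\le 1$ for every $\theta$ and \emph{every} choice of $\Delta\tau,\Delta y$.

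Feeding the mode into the right-hand side of \eqref{13b} factorises it immediately as $\lambda^{n}(\lambda+1)\,\Psi(\theta)$, where $\Psi(\theta)=A_n e^{i\theta}+B+C_n e^{-i\theta}$ is the symbol of the spatial stencil. The left-hand side is the discrete convolution coming from the Caputo--Fabrizio kernel, and this is the step I expect to be the main obstacle: it is not a one-step expression. The way through is to exploit the exponential kernel, which makes the memory recursive. Writing $\hat H_n$ for the Fourier transform of the convolution, one has the identity $\hat H_n=e^{-\gamma}\bigl[\lambda^{n-1}(\lambda-1)+\hat H_{n-1}\bigr]$ with $\gamma=\tfrac{\alpha}{1-\alpha}\Delta\tau$; substituting the scheme at levels $n$ and $n-1$ collapses the memory and yields a single amplification relation
\[
(\lambda+1)\bigl(\lambda-e^{-\gamma}\bigr)\,\Psi(\theta)=e^{-\gamma}(\lambda-1).
\]

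The engine of the argument is the sign of the real part of the symbol. Using the coefficient definitions following \eqref{13b} one computes $A_n+C_n=\kappa\tfrac{\sigma^2}{2(\Delta y)^2}$ and $B=-\tfrac{\kappa}{2}\bigl(\tfrac{\sigma^2}{(\Delta y)^2}+r\bigr)$ with $\kappa=\tfrac{\Delta\tau\,\alpha}{e^{\gamma}-1}>0$, so that
\[
\mathrm{Re}\,\Psi(\theta)=(A_n+C_n)(\cos\theta-1)-\tfrac{\kappa r}{2}\le 0
\]
for all $\theta$, with no restriction whatsoever on the step sizes. This unconditional negativity—rather than the positivity condition of Lemma~\ref{lemma1}, which is \emph{not} needed here—is exactly what should force $|\lambda|\le 1$, mirroring the classical Crank--Nicolson mechanism in which a symbol with non-positive real part maps, under a Cayley-type transform, into the closed unit disc.

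To finish, I would solve the amplification relation for $\lambda$ (a quadratic with complex coefficients) and show that every admissible root lies in the closed unit disc, either via the Schur--Cohn criterion or by recasting the physical root in the form $\lambda=(1+\zeta)/(1-\zeta)$ with $\mathrm{Re}\,\zeta\le 0$. The delicate point, where I expect the genuine work, is to deduce the root condition directly from $\mathrm{Re}\,\Psi\le 0$ together with $0<e^{-\gamma}<1$, while identifying and discarding any spurious root introduced by the memory reduction, so as to confirm $|\lambda|\le 1$ independently of $\Delta\tau$ and $\Delta y$. A brief parallel check of the boundary relation for $X^{n}_f$ by the same substitution would complete the stability statement.
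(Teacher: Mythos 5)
Your proposal follows the same overall strategy as the paper (Von Neumann analysis, insertion of a Fourier mode, and the observation that the real part of the spatial symbol $\Psi(\theta)=A_ne^{i\theta}+B+C_ne^{-i\theta}$ is non-positive because $A_n+C_n$ contributes $(\cos\theta-1)\le 0$ and $B$ contributes $-r$, with no step-size restriction and no appeal to Lemma~\ref{lemma1}), but it diverges in the one place that matters technically: the treatment of the Caputo--Fabrizio memory sum. The paper substitutes $v^m_n=e^{at}e^{iby}$ with $a$ real, factors $(\lambda-1)$ out of the history sum, keeps the entire sum $\Sigma=\sum_{k=1}^n e^{-k\Delta\tau(a+\frac{\alpha}{1-\alpha})}$ as a lumped \emph{positive real} prefactor, and then reads off a single linear relation $(\lambda-1)\Sigma=(\lambda+1)\Psi$, so that $\lambda$ is a M\"obius image $(\Sigma+\Psi)/(\Sigma-\Psi)$ whose modulus is at most one whenever $\Sigma>0$ and $\mathrm{Re}\,\Psi\le 0$; the imaginary part of $\Psi$ (the convection and free-boundary terms) is disposed of by requiring it to vanish as a separate equation of the ``system'' in \eqref{stt}, which is the weakest point of the paper's argument. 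You instead exploit the semigroup property of the exponential kernel to collapse the memory into a genuine one-step recursion, $\hat H_n=e^{-\gamma}\bigl[\lambda^{n-1}(\lambda-1)+\hat H_{n-1}\bigr]$, arriving at the quadratic $(\lambda+1)(\lambda-e^{-\gamma})\Psi=e^{-\gamma}(\lambda-1)$; this is cleaner and avoids the paper's implicit assumption that $\Sigma$ can be treated as a $\lambda$-independent positive constant, but it buys you a harder endgame: you must locate \emph{both} roots of a quadratic with complex coefficients in the closed unit disc, and you explicitly defer that Schur--Cohn (or Cayley-transform) verification. As written, then, your argument is a sound and arguably more honest plan rather than a finished proof: the root-location step from $\mathrm{Re}\,\Psi\le 0$ and $0<e^{-\gamma}<1$ is precisely where the remaining work lies, whereas the paper sidesteps a quadratic altogether at the cost of the ad hoc ``imaginary part equals zero'' condition and of treating the full history sum as a frozen positive scalar.
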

\begin{proof}
The non-linear nature of the model in \eqref{eq4} arises in the presence of 
\begin{align}
\begin{split}
\frac{1}{X_f(\tau)}\frac{dX_f(\tau)}{d\tau} \label{st1} 
\end{split}
\end{align}
Then we linearize and approximate \eqref{st1} by  considering $X_f(\tau)$ and its derivatives in the stability analysis.
\begin{align}
\begin{split}
\frac{1}{X_f(\tau)}\frac{dX_f(\tau)}{d\tau}=\frac{X^{n+1}_f(\tau)-X^{n}_f(\tau)}{\Delta\tau X^{n+1}_f(\tau)}
\end{split}
\end{align}
that assumes non positive values because $X^n_{f}(\tau)$ is positive and not increasing.\\
Let 
\begin{align*}
&v^{m}_{n}=e^{at} e^{iby} \notag\\
&v^{m+1}_{n+1}=e^{a(t+\Delta\tau)} e^{i(b(y+\Delta y)}=\lambda e^{at} e^{ib(y+\Delta y)} \notag\\
&v^{m-1}_{n}=e^{at} e^{ib(y-\Delta y)}=\lambda e^{ib(y-\Delta y)}\notag\\
&v^{m+1}_{n}=e^{at} e^{ib(y+\Delta y)} \notag\\
&v^{m-1}_{n+1}=e^{a(t+\Delta\tau)} e^{ib(y-\Delta y)}=\lambda e^{at} e^{ib(y-\Delta y)}\notag\\
&v^{m}_{n+1}=e^{a(t+\Delta\tau)} e^{iby}=\lambda e^{at} e^{iby}\notag\\
&v^{m}_{n+1-k}=e^{a(t+(1-k)\Delta\tau)} e^{iby}=\lambda e^{at\Delta\tau}e^{-ak\Delta\tau} e^{iby}\notag\\
&v^{m}_{n-k}=e^{a(t-k)\Delta\tau} e^{iby}= e^{at\Delta\tau} e^{-ak\Delta\tau} e^{iby} \notag \text{~~and~~~} \\
& \lambda=\frac{v^{m}_{n+1}}{v^{m}_{n}}=e^{a\Delta\tau}
\end{align*}
be a Fourier component and where $b, a \in \mathbb{R}$ with $a \neq 0$ and $b \neq 0$.\\
Hence \eqref{13b} becomes
\begin{align}
 \sum_{k=1}^{n}(e^{a(t+(1-k)\Delta\tau)}-e^{a(t-k\Delta\tau)} )e^{iby}e^{\frac{-\alpha}{1-\alpha}k\Delta\tau}=&A_{n} e^{a(t+\Delta\tau)} e^{i(b(y+\Delta y)}+B e^{a(t+\Delta\tau)} e^{iby}+ C_{n} e^{a(t+\Delta\tau)} e^{ib(y-\Delta y)} \notag \\&+ A_{n} e^{at} e^{ib(y+\Delta y)}+ B e^{at} e^{iby} +C_{n} e^{at} e^{ib(y-\Delta y)} 
\end{align}
Thus
\begin{align}
 \sum_{k=1}^{n}(\lambda - 1)e^{-k\Delta\tau(a+\frac{\alpha}{1-\alpha})}=&\lambda A_{n}  e^{ib \Delta y} + \lambda B +\lambda C_{n} e^{-ib \Delta y}+ A_{n} e^{ib \Delta y }+ B  + C_{n}  e^{-ib\Delta y} 
\end{align}
This implies 
\begin{align}
\begin{split}
 \sum_{k=1}^{n}(\lambda -1)e^{-k\Delta\tau(\frac{\alpha}{1-\alpha}+a)}=(\lambda+1) (A_{n}  e^{ib \Delta y}+ B + C_{n}  e^{-ib\Delta y} )
\end{split}
\end{align}
From this we deduce that 
\begin{align}
 (\lambda -1)\sum_{k=1}^{n}e^{-k\Delta\tau(\frac{\alpha}{1-\alpha}+a)}&=(\lambda+1)\frac{\Delta\tau\alpha}{2(e^{\frac{\alpha}{1-\alpha}\Delta\tau} -1)}(\frac{-\sigma^2}{(\Delta y)^2}(1- \cos(b\Delta y))\notag\\&+\frac{i \sin(b\Delta y)}{\Delta y}(r-\frac{\sigma^2}{2})
+\frac{i \sin(b\Delta y)(X^{n+1}_f(\tau)- X^{n}_f(\tau))}{\Delta y\Delta\tau X^{n}_f(\tau)}-r)\notag\\&
=(\lambda + 1)\frac{\Delta\tau\alpha}{2(e^{\frac{\alpha}{1-\alpha}\Delta\tau} -1)}(\frac{-2\sigma^2}{(\Delta y)^2} \sin^{2}(\frac{b\Delta y)}{2})\notag\\&+\frac{i \sin(b\Delta y)}{\Delta y}(r-\frac{\sigma^2}{2})
+\frac{i \sin(b\Delta y)(X^{n+1}_f(\tau)- X^{n}_f(\tau))}{\Delta y\Delta\tau X^{n}_f(\tau)}-r) \label{stt}
\end{align}
Since the left-hand side of \eqref{stt} is a pure real  and after further simplification it becomes
\begin{align}
\begin{cases}
 \lambda= \Large {\frac{\frac{2\left(e^{\frac{\alpha}{1-\alpha}\Delta\tau}-1\right)}{\Delta\tau\alpha}\sum_{k=1}^{n}e^{-k\Delta\tau(\frac{\alpha}{1-\alpha}+a)}-\frac{2\sigma^2}{(\Delta y)^2} \sin^{2}(\frac{b\Delta y}{2})-r}{\frac{2\left(e^{\frac{\alpha}{1-\alpha}\Delta\tau}-1\right)}{\Delta\tau\alpha }\sum_{k=1}^{n}e^{-k\Delta\tau(\frac{\alpha}{1-\alpha}+a)}+\frac{2\sigma^2}{(\Delta y)^2} \sin^{2}(\frac{b \Delta y}{2})+r}} \\\\ \frac{i \sin(b\Delta y)}{\Delta y}(r-\frac{\sigma^2}{2})+\frac{i \sin(b \Delta y)(X^{n+1}_f(\tau)- X^{n}_f(\tau))}{\Delta y\Delta \tau X^{n}_f(\tau)}=0 
\end{cases}
\end{align}
With the above system in hand, the main idea here is to show that
\begin{align}
\lVert \lambda \rVert \leq 1
\end{align}
From 
\begin{align*}
\frac{1}{1-\alpha}>1
\end{align*}
which implies that 
\begin{align}
e^{\frac{\alpha}{1-\alpha}}=e^{\frac{1}{1-\alpha}}e^{-1} > 1 \notag\\
e^{{\frac{\alpha}{1-\alpha}}\Delta\tau}> 1. \label{stabl}
\end{align}
By using \eqref{stabl} it is obvious concluding 
\begin{align}
\lVert \lambda \rVert= \bigg\lVert \frac{\frac{2\left(e^{\frac{\alpha}{1-\alpha}\Delta\tau}-1\right)}{\Delta\tau\alpha}\sum_{k=1}^{n}e^{-k\Delta\tau(\frac{\alpha}{1-\alpha}+a)}-\frac{2\sigma^2}{(\Delta y)^2} \sin^{2}(\frac{b\Delta y}{2})-r}{\frac{2\left(e^{\frac{\alpha}{1-\alpha}\Delta\tau}-1\right)}{\Delta\tau\alpha}\sum_{k=1}^{n}e^{-k\Delta\tau(\frac{\alpha}{1-\alpha}+a)}+\frac{2\sigma^2}{(\Delta y)^2}\sin^{2}(\frac{b\Delta y}{2})+r}\bigg\rVert < 1
\end{align}
With the above inequality, we can conclude that the Crank-Nicolson scheme for the  American put option with Caputo-Fabrizio fractional order derivative is unconditionally stable.
\end{proof}
\section{Numerical experiments and comparisons}
\subsection{Algorithm with MATLAB}
In this section we can write the algorithm of section two with MATLAB code.The algorithm below is obtained by incorporating all the equations defined in section two:\\
\begin{enumerate}
\item Input$~~\sigma, r, E, T, M, \mu~~ and~~ Y $;
\item Define the grid $(y_{m}, \Delta y, t_{n}, \Delta t)$;
\item For $m =0, 1, 2,\hdots, M $ do $V^{0}_{m}=0$ and set $X^{0}_f=1$;\\
Define the following
\begin{align*}
&\vartheta _{k}=e^{\frac{-\alpha}{1-\alpha}k\Delta\tau}; \notag\\
&\iota_{k,m}=v(t_{n+1-k},y_m)-v(t_{n-k},y_m);\notag\\
&\phi_{k}=X_f(t_{n+1-k},0)-X_f(t_{n-k},0);\notag\\
&\theta=\frac{\Delta\tau\alpha\sigma^2}{4(\Delta y)^2[\vartheta _{-1} -1]};\notag\\
&\beta=\frac{\Delta\tau\alpha}{4(\Delta y)[\vartheta _{-1} -1]}(r-\frac{\sigma^2}{2});\notag\\
&\omega_{n}=\frac{\Delta\tau\alpha}{e^{\frac{\alpha}{1-\alpha}\Delta\tau} -1}[\frac{X^{n+1}_f(\tau)- X^{n}_f(\tau)}{4\Delta y\Delta\tau X^{n}_f(\tau)}]\notag\\
&\Omega_1= \sum_{k=1}^{n}\iota_{k,1}\vartheta _{k}-\frac{\Delta\tau}{\vartheta _{-1} -1} (\frac{\alpha\sigma^2}{4(\Delta y)^2}(V^{2}_{n+1}+V^{0}_{n+1}+V^{2}_{n}+V^{0}_{n})\notag\\+&(1+r\frac{\Delta y^2}{\sigma^2})(\frac{\vartheta _{-1} -1}{\Delta\tau\alpha}\sum_{k=1}^{n}\phi_{k}\vartheta _{k}+\frac{X^{n}_f(\tau)}{\Delta\tau}-r)+X^{n}_f(\tau)({\Delta y}+1)+2)\notag \\
&\Omega_2=\frac{\Delta\tau\alpha}{2(\vartheta _{-1} -1)}(\frac{V^{2}_{n+1}-V^{0}_{n+1}+V^{2}_{n}-V^{0}_{n}}{2\Delta\tau \Delta y X^{n}_f(\tau)}-(\frac{\sigma^2}{(\Delta y)^2}+r)(\frac{2(\Delta y)^2}{\Delta\tau\sigma^2}-1))
\end{align*} 
\item Then compute
\begin{align*}
&A_{n}=\theta+\beta+\omega_{n};\notag\\
&C_{n}=\theta-\beta-\omega_{n}; \notag\\
&B=\frac{-\Delta\tau\alpha}{2[\vartheta _{-1} -1]}(\frac{\sigma^2}{(\Delta y)^2}+r)\notag\\
&X^{n+1}_f(\tau)=\frac{\Omega_1}{\Omega_2};\notag\\
& v^{0}_{n}=1-X^{n}_f(\tau);\notag\\
&\sum_{k=1}^{n}\iota_{k,m}\vartheta _{k} =A_{n} v^{m+1}_{n+1}+ B v^{m}_{n+1}+ C_{n} v^{m-1}_{n+1}+ A_{n} v^{m+1}_{n}+ B v^{m}_{n} +C_{n} v^{m-1}_{n} ~~\text{and};\notag\\
&\text{end, set~~} v^{M}_{n}= 0, ~~\text{end}
\end{align*}
\end{enumerate}
We implement the above algorithm in MATLAB to get numerical results as follows.
\subsection{Numerical Experiments}
In this section, we have to present a numerical test of the Crank-Nicolson finite difference scheme. To this end, the APO problem \eqref{eq4} to \eqref{eq7} is taken into account under the following conditions:\\
 $r= 0.1, \sigma = 0.2, E=T=1$\\
Our first and foremost aim is to investigate numerically the appropriate value of $Y$.To achieve this straightforward, we can monitor and compare the numerical result obtained $X^N_f~~ at~~t=T$ for different values of gird steps by fixing $\mu=20 $ with $Y=1,Y=2 \text{~and~ }Y=4$, and decide the appropriate value of $Y$.\\
Then, we compare and contrast our findings with those of Nielsen et al.\cite{nielsen2002penalty} and Riccardo.F \cite{fazio2020american}.
By refining the mesh, we improved numerical accuracy. Recall that the Crank-Nicolson difference scheme is first-order in time and second-order accurate in space for both the field variable and the free boundary value. Using the above result and keeping the constant $\mu$, we perform a mesh refinement, and consequently, we wind up with a second-order truncation error in time.

\clearpage
\bibliographystyle{plain}
\bibliography{Ref_Abera}
\nocite{*}
\end{document}